\DeclareMathOperator{\id}{id}
\newtheorem{theorem}{Theorem}
\newtheorem{definition}{Definition}
\newtheorem{lemma}{Lemma}
\newtheorem{proposition}{Proposition}
\newtheorem{prop}{Proposition}
\newtheorem{conjecture}[theorem]{Conjecture}
\def\c{\mathcal{C}}
\def\Z{{\mathbf{Z}}}
\def\Q{\mathbf{Q}}
\def\C{\mathbf{C}}
\def\Aut{\rm Aut}
\def\Q{{\mathbf{Q}}}
\def\c{{\mathcal C}}
\title{Corrections to {\it Uniformity of rational points}  and further comments}
\author{Lucia Caporaso, Joe Harris and Barry Mazur}
\date{\today}                                           % Activate to display a given date or no date
\begin{document}
\maketitle

\section{Introduction}

The purpose of this note is to correct, and enlarge on, an argument in a paper [\ref{1}] published almost a quarter century ago. The question raised in~[\ref{1}] is a simple one to state: given that a curve $C$ of genus $g \geq 2$ defined over a number field $K$ has only finitely many rational points, we ask if the number of points is bounded as $C$ varies. 

In~[\ref{1}] it is asserted that, assuming the truth of the Strong Lang Conjecture (Conjecture~\ref{SLC} below), a very strong form of boundedness holds: for every $g \geq 2$ there is a finite  bound $N(g)$---not depending on $K$!---such that for any number field $K$ there are only finitely many isomorphism classes of curves of genus $g$ defined over $K$ with more than $N(g)$ $K$-rational points. The issue is, in that statement do we mean finitely many isomorphism classes over $K$, or over the algebraic closure $\overline K$? The paper asserts the statement in the stronger form---up to isomorphism over $K$---but the proof establishes only the weaker statement that there are finitely many curves with more than $N(g)$ points up to isomorphism over $\overline K$.

The main purpose of this note is to give a complete argument of the stronger form, which we will do in Sections~\ref{key lemma}  and \ref{main argument}. Of course, if indeed there is a ``universal" bound $N = N(g)$ on the number of points on a curve of genus $g$ defined over an arbitrary number field---with finitely many exceptions for any given $K$---the question of how large $N(g)$ has to be is an intriguing one, and we devote the final chapter to a preliminary discussion of this  and related questions.

%The key to bridging the gap between these two statements is the notion, introduced below, of a {\bf thick} family $f : X \to S$ of curves---that is, one such that if $C$ is any fiber  over a $K$-valued point of $S$ and $C'$ is any curve defined over $K$ isomorphic to $C$ over $\overline K$ and having at least one $K$-rational point, then $C'$ also appears as a fiber of $f$---and the proof, in Lemma~\ref{pr} below, that certain families have this property.

We are extremely grateful to Jakob Stix, who pointed out our error, and who helpfully---and generously---entered into detailed discussion about it with us. We are also immensely thankful to Dan Abramovich  for his patient guidance as we wrote this, and for  his more general results  regarding uniform boundedness.

\section{Moduli Spaces}\label{mod} Fix a genus $g >1$.
\begin{itemize} \item ({\bf The coarse moduli space}) Let $M= M_g$, the coarse moduli space of smooth projective  curves of genus $g$; so $M$ is a variety defined over $\Q$.
 
  \item  ({\bf  The rigidified moduli space})   \begin{definition}  A point $p$ in a variety $V$ over  a field $K$  is {\bf rigid in $V$} if there are no nontrivial automorphisms of $V$ (over the algebraic closure  ${\bar K}$)  that fix $p$; i.e., for any automorphism $\alpha:V \to V$ if $\alpha(p) = p$ then $\alpha$ is the identity.\end{definition}  Let ${\mathcal M}_{g,1}$ be the Deligne-Mumford stack  of smooth projective curves $C$ of genus $g$ with one marked point $p \in C$. We will denote by ${\mathcal M}^*$ the open substack of ${\mathcal M}_{g,1}$ corresponding to pairs $(C,p)$ where $C$ is a smooth projective curve  of genus $g$ and $p$ is a rigid point in $C$. (Call such a pair $(C,p)$ a {\bf rigidified curve}.) The stack ${\mathcal M}^*$ has trivial inertia and so ( is a fine moduli space)   representable by an algebraic space   $M^*$)(cf.  92.13 in \cite{12}).  The algebraic space $M^*$ is a quasi-projective scheme (cf.   the classical results of Knudsen \cite{11} and Koll\'ar  \cite{10}).   We note that $M^*$ is a scheme of finite type over $\Q$ and: there is a universal family $\phi: \c_{M^*} \to M^*$, such that for any rigidified curve $(C,p)$ defined over $K$ there is a $K$-point $[(C,p)] \in M^*$ such that the fiber of $\c_{M^*}$ over the point $[(C,p)]$ is isomorphic to $C$.
  
  The forgetful projection $(C,p) \mapsto C$ gives us a mapping $$M^* \longrightarrow M$$ defined over $\Q$ (with one-dimensional fibers).
  
  \begin{prop}\label{rigid} For   $g>1$  there is a finite bound $B_g$ with the property that if  $K$ is a (number) field and $C$ a  smooth projective curve of genus $g$, defined over $K$, such that $|C(K)|  \ > B_g$  there is a $K$-rational rigid point $p$ in $C$.  The curve  $C$ is  (therefore)  represented by a $K$-rational point of $M^*$.\end{prop}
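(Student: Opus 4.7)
The plan is to exploit the fact that rigidity is a purely geometric condition and that the locus of non-rigid points on any genus-$g$ curve $C$ is finite, with cardinality bounded purely in terms of $g$. Once such a bound $B_g$ is in hand, any curve $C/K$ with more than $B_g$ $K$-rational points must contain at least one rigid $K$-rational point $p$, and the pair $(C,p)$ then supplies the required $K$-point of $M^*$ via the universal property recalled above.

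Concretely, unwinding the definition, the non-rigid locus of $C$ is the finite union
\[
\mathrm{NR}(C) \;=\; \bigcup_{\alpha \in \Aut_{\bar K}(C),\ \alpha \ne \id} \mathrm{Fix}(\alpha) \;\subset\; C(\bar K).
\]
Since $g>1$, two classical facts bound $|\mathrm{NR}(C)|$ from above, independently of both $C$ and $K$. First, Hurwitz's theorem gives $|\Aut_{\bar K}(C)| \le 84(g-1)$. Second, for each nontrivial $\alpha$, applying Riemann--Hurwitz to the quotient $C \to C/\langle \alpha \rangle$ (or, equivalently, invoking the Lefschetz fixed-point formula together with $|\mathrm{tr}(\alpha^* \mid H^1(C))| \le 2g$) bounds $|\mathrm{Fix}(\alpha)|$ by a quantity depending only on $g$, for instance by $2g+2$. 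Multiplying the two bounds yields
\[
|\mathrm{NR}(C)| \;\le\; 84(g-1)(2g+2) \;=:\; B_g,
\]
a quantity depending only on $g$.

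Now suppose $|C(K)| > B_g$. Since the set of non-rigid $K$-rational points is contained in $\mathrm{NR}(C)$, it has at most $B_g$ elements, so by counting there exists at least one point $p \in C(K)$ which is rigid in the sense of the definition. The pair $(C,p)$ is then a rigidified curve defined over $K$, and by the universal property of the fine moduli scheme $M^*$ it corresponds to a $K$-rational point $[(C,p)] \in M^*(K)$ whose image in $M$ represents $C$.

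I do not foresee a serious obstacle: the argument is a clean combination of the two standard bounds of Hurwitz and Riemann--Hurwitz. The only subtlety worth flagging is that ``rigid'' is defined using \emph{all} $\bar K$-automorphisms, not merely the $K$-rational ones, so it is essential that the Hurwitz bound be applied to the geometric automorphism group and that fixed points be counted over $\bar K$; the $K$-rational points enter only at the final step, where having strictly more than $B_g$ of them forces at least one to lie outside the (possibly larger) geometric non-rigid locus.
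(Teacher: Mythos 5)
Your argument is correct, and its overall skeleton matches the paper's: reduce the proposition to showing that the locus of non-rigid points of $C$ is finite with cardinality bounded in terms of $g$ alone, then pigeonhole among the $K$-rational points, and finally invoke the universal property of the fine moduli space $M^*$. Where you differ is in how that bound is obtained. You use a union bound: Hurwitz's $|\Aut(C)| \le 84(g-1)$ times the fixed-point bound $|\mathrm{Fix}(\alpha)| \le 2g+2$ for each nontrivial $\alpha$ (both facts are correct in characteristic $0$, and the Lefschetz/Riemann--Hurwitz justification you give is fine), yielding $B_g = 84(g-1)(2g+2)$, which grows quadratically in $g$. The paper instead (Appendix 2, a lemma of Stix) bounds the non-rigid locus in one stroke as the ramification locus of the single quotient map $C \to C/\Aut(C)$, writing $T = |G|\sum_P 1/e_P$ and running Riemann--Hurwitz with a case analysis on the genus of the quotient and the number of branch points; this gives the sharp linear bound $B_g = 82(g-1)$, attained exactly by Hurwitz curves. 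Since the proposition only asks for \emph{some} finite $B_g$, your cruder bound is entirely sufficient for the application, and your route is more elementary in that it avoids the branch-point case analysis; what the paper's computation buys is sharpness and the characterization of the extremal curves.
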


We thank Jakob Stix for communicating a proof  of the fact that one can take $B_g$ to be equal to $82(g-1)$. See  Appendix 2 (section \ref{aut} below); 

\item  ({\bf The moduli space with level structure}) Here it will suffice for us to work over $\C$.  Let $\ell \gg 0$ be a prime and  ${\tilde M}_{g,1}:=M_{g,1}[\ell]$ the moduli space of smooth pointed curves of genus $g$ with full level $\ell$ structure.  That is, $M_{g,1}[\ell]$ classifies pairs $(C,\lambda)$  where $C$ is a smooth pointed curve of genus $g$  (over $\C$) and  (the `level structure') $\lambda$ is an isomorphism of  ${\bf F}_\ell$-vector spaces $$\lambda: {\bf F}_\ell^{2g} \stackrel{\simeq}{\longrightarrow} H_1(C_\C; {\bf F}_\ell).$$  Note that  ${\tilde M}_{g,1}$ is not connected, but this won't bother us. The finite group $G:= {\rm GL}_{2g}({\bf F})$ acts on  ${\tilde M}_{g,1}$ with quotient  $M_{g,1}$. 

  Define  ${\tilde M}^*$ by the following diagram, the upper square being exact:{\footnote{An {\it exact} square $$\xymatrix{A\ar[r]\ar[d] & B\ar[d] \\  D\ar[r] & C}$$  is a commutative square, where the mapping $A \to B\times_CD$  determined by the diagram is an isomorphism.}}

   \begin{equation}\label{exact1} \xymatrix{{\tilde M}^*\ar[r]\ar[d]^G & {\tilde M}_{g,1}\ar[d]^G \\
    M^*\ar[r]\ar[d] &  M_{g,1}\ar[d]\\
    M\ar[r]^= & M.}\end{equation}
    
    \noindent   So the group $G$ acts on ${\tilde M}^*$ with quotient $M^*$ rendering ${\tilde M}^*$ a $G$-torsor over $M^*$ as well.  The fine moduli space   ${\tilde M}^*$ classifies triples $(C,p,\lambda)$ and we  have an exact square of   universal families:
    
      \begin{equation}\label{exact2} \xymatrix{\c_{{\tilde M}^*}\ar[r]^G\ar[d]^{\tilde \phi} & \c_{M^*}\ar[d]^\phi \\
    {\tilde M}^*\ar[r]^G &  M^*.}\end{equation}
    
\noindent These  (i.e., the vertical morphisms) are flat families of smooth projective curves of genus $g$, and  the  group $G$ acts equivariantly, rendering the domains of the horizontal morphisms $G$-{\it torsors} over the corresponding ranges{\footnote{E.g., the mapping $$G\times  {\tilde M}^* \   \longrightarrow\  {\tilde M^*}\times_{M^*}{\tilde M}^*$$ given by $(g,m) \mapsto (m, g(m))$ is an isomorphism.}}.

\item ({\bf General families of rigid  curves})  Let $B$ be a scheme of finite type over $\C$, and  $\phi_B: \c_B \to B$  a flat family of smooth projective {\it rigidified} curves of genus $g$ (over $B$)---that is, such that there is a section $p: B \to \c_B$  having the property that for every point $b$ of $B$ the image point $p(b)$ in the fiber $\c_b$ over $b$ is a  rigid point of that curve $\c_b$.   Since  $M^*$ is the fine moduli space for such objects,  this family comes by pullback from a unique morphism $j:B \to M^*$ and $\phi_B$ fits into a diagram, the upper square being exact:

 \begin{equation}\label{exact3} \xymatrix{\c_B\ar[r] \ar[d]^{\phi_B} & \c_{M^*}\ar[d]^\phi \\
    B\ar[r]^j\ar[dr]^i\ar[d]   &  M^*\ar[d]^k\\
B_0=i(B)\ar[r]^{\hookrightarrow}  \ & M   }\end{equation}
    
Here, by Chevalley's classical theorem, the image of $B$  in $M^*$ (via the  mapping $j$) and in $M$ (via the  mapping $i$)  are constructible sets, so the first is a finite union  of locally closed (irreducible) subvarieties of  $M^*$, and the second  is a finite union  of locally closed (irreducible) subvarieties of  $M$.   We will deal, inductively with all of these subvarieties; but:

\begin{itemize} \item Let  $B'_0$ be any one of the  locally closed (irreducible) subvarieties  in $M$ that is among   components of the constructible set which is the image of $B$  in   $M$, and 
\item let $B'$ be a  locally closed (irreducible) subvariety of  $M^*$ that is \begin{itemize} \item  among  components of the constructible set which is the image of $B$  in   $M^*$ and\item that  contains a Zarkisi-dense open in the  inverse image of $B'_0$ under  $k$.\end{itemize} \end{itemize} 

We   have an analogous diagram as (\ref{exact3})  but\begin{itemize} \item with $B$ replaced with $B'$; and   $B_0$ replaced with $B_0'$; but such that  \item all morphisms are  morphisms of varieties, and  \item where $B_0'$  and $B'$ are locally closed subvarieties of $M$ and $M^*$ respectively.\end{itemize}  

 Removing the primes ($'$) from the terminology we have:

 \begin{equation}\label{Exact4} \xymatrix{\c_B\ar[r]^{\hookrightarrow} \ar[d]^{\phi_B} & \c_{M^*}\ar[d]^\phi \\
    B\ar[r]^{\hookrightarrow}\ar[dr]^i\ar[d]   &  M^*\ar[d]^k\\
B_0=i(B)\ar[r]^{\hookrightarrow}  \ & M   }\end{equation} 
   In diagram (\ref{Exact4}) it is {\it only} the upper square that is exact. These are the diagrams we will be studying. Call such a family of rigid curves, $\c_{B}\to B$, {\bf clean}.  From now on we will assume that our families $\c_B \to B$ are `clean.'

 Augmenting such a {\it clean} family with level structure by tensoring with ${\tilde M}$  (over $M$) with  we might form
 
  \begin{equation}\label{exact4} \xymatrix{\c_B\ar[r]  & B\ar[r]^j  &  M^* &  \c_{M^*}\ar[l] \\ \c_{\tilde B}\ar[r]\ar[u]^{G}\ar[d] & {\tilde B}\ar[r]^{\tilde j}\ar[u]^{G}\ar[d] & {\tilde M}^*\ar[u]^{G}\ar[d]  & \c_{{\tilde M}^*}\ar[d]\ar[l]\ar[u]^{G}\\
\c_{\tilde B_0}\ar[r] &   {\tilde B_0}\ar[r]^{\hookrightarrow}\ar[d]^{G}  & {\tilde M}\ar[d]^{G} & \c_{\tilde M}\ar[l] \\ \ & B_0\ar[r]^{\hookrightarrow}  & M  & \  } \end{equation}

 Here the vertical  mappings in the two exact diagrams
 
   $$\xymatrix{\c_B\ar[d]\ar[r] & \c_{M^*}\ar[d] &\ \ \ \ \  & \c_{\tilde B_0}\ar[r]\ar[d]  &  \c_{\tilde M}\ar[d]\\
   B\ar[r]^{\hookrightarrow} & {M^*} &\ \ \ \ \  & {\tilde B_0}\ar[r]^{\hookrightarrow}  &  \tilde M
   }$$   are flat families of (smooth projective rigidified curves of genus $g$) and---respectively--- flat families of (smooth projective curves of  genus $g$ with level structure).  The arrows labelled ``$G$" are morphisms obtained by passing to the quotient by the natural action of $G$.  All squares  where the vertical arrows are labelled ``$G$" are cartesian and $G$-equivariant.  And note that  the schemes on the bottom line of diagram \ref{exact4}---i.e.,  $B_0 \hookrightarrow M$---do not possess ``universal families."
 
\end{itemize}

\section{A Strengthened Correlation Theorem}\label{key lemma}

\noindent Note: the results of this section are purely geometric, rather than arithmetic; objects will be varieties defined over $\C$. Moreover, we will be dealing entirely with birational properties, so we will feel free to restrict to open subsets where convenient. Thus, for example, when we say that the fibers of a morphism are curves of genus $g$, we will mean that they are open subsets of a curve whose normalization is a smooth projective curve of genus $g$.

For our purposes, we will need the following slightly  strengthened version of the {\it Correlation Theorem}, the key geometric lemma   (i.e., Proposition 3.1) of [2]:

\begin{proposition}\label{basic lemma}  With the notation of the previous section, if the map $B \stackrel{j}{\longrightarrow} M^*$ is generically finite, then for $n \gg 0$ the fiber power $\c^n_B$   (over $B$)  is of general type.
\end{proposition}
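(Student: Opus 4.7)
The strategy is to reduce to Proposition~3.1 of~[2] by a case analysis on whether the composite $i = k\circ j : B \to M$ is itself generically finite.

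If $i$ is generically finite, then the family $\c_B \to B$ agrees, as a family of underlying varieties, with the pullback of the universal curve from $M$ via $i$---since the rigidified universal curve $\c_{M^*}$ is birationally $M^*\times_M \c_M$---and Proposition~3.1 of~[2] applies verbatim to give $\c_B^n$ of general type for $n\gg 0$.

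The substantive case is when $i$ is \emph{not} generically finite. Set $B_0 := i(B)$, so $\dim B_0 < \dim B$. Since $j$ is generically finite one has $\dim j(B) = \dim B$, and since $k$ has $1$-dimensional fibers the restricted projection $k|_{j(B)} \colon j(B) \to B_0$ has generic fibers of dimension exactly $1$, forcing $\dim B = \dim B_0 + 1$. Because these $1$-dimensional fibers sit inside the irreducible $1$-dimensional fibers of $k$, they must coincide generically, so $j(B)$ is birational to $k^{-1}(B_0) \cong \c_{B_0}$; composing with $j$ exhibits $B$ as generically finite and dominant over $\c_{B_0}$. The key identification is then
$$\c_B^n \;=\; B\times_{B_0}\c_{B_0}^n \;\cong\; B\times_{\c_{B_0}}\c_{B_0}^{n+1},$$
which displays $\c_B^n$ as generically finite and dominant over $\c_{B_0}^{n+1}$. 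Applying Proposition~3.1 of~[2] to the (trivially generically finite) inclusion $B_0\hookrightarrow M$ yields $\c_{B_0}^{n+1}$ of general type for $n+1\gg 0$, and general type then propagates to $\c_B^n$ along this generically finite dominant map via the ramification comparison $K_X = f^*K_Y + R$ with $R\geq 0$, which forces $h^0(X, mK_X)\geq h^0(Y, mK_Y)$.

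The main technical obstacle will be making the birational identification $j(B)\sim \c_{B_0}$ rigorous on loci in $M$ where curves have nontrivial automorphisms (e.g.\ the hyperelliptic locus), where the coarse scheme $M^*$ is not literally the universal curve over $M$. This is exactly what the level-structure apparatus of Section~\ref{mod} (and diagram~(\ref{exact4})) is built for: passing to $\tilde M^*$ replaces the coarse spaces by fine moduli schemes on which the identification becomes literal, and the general-type property descends from the finite-\'etale $G$-cover back to $\c_B^n$.
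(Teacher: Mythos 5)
There is a genuine gap, and it sits exactly where you relegate the difficulty to a closing remark about "technical obstacles." Your reduction hinges on applying the original Correlation Theorem to the inclusion $B_0 \hookrightarrow M$ to conclude that a fiber power of "$\c_{B_0}$" is of general type. But no family $\c_{B_0} \to B_0$ exists: $M$ is only a coarse moduli space, and over the locus where curves have automorphisms there is no tautological family over $B_0$ to which the Correlation Theorem could be applied. (The paper flags this explicitly: "the schemes on the bottom line... i.e., $B_0 \hookrightarrow M$ --- do not possess universal families," and "the obstruction to proving the Proposition is that there may not exist a tautological family over $B_0$.") Concretely, the fiber of $k : M^* \to M$ over a point $[C]$ is not $C$ but $C^*/\Gamma$ with $\Gamma = \Aut(C)$, so your identification $k^{-1}(B_0) \cong \c_{B_0}$, and hence the key displayed isomorphism $\c_B^n \cong B \times_{\c_{B_0}} \c_{B_0}^{n+1}$, has no meaning on the loci that matter.

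Your proposed repair --- pass to level structures so the identification becomes literal over $\tilde B_0$, then let "the general-type property descend from the finite-\'etale $G$-cover back to $\c_B^n$" --- fails because general type does \emph{not} descend along finite quotients: if $\tilde X \to \tilde X/G$ is a finite quotient, $\tilde X$ of general type does not imply $\tilde X/G$ is (quotients of general type varieties by finite groups can be rational). Since $\c^n_B \cong \c^n_{\tilde B}/G$ (Lemma~\ref{fp1}), you are trying to go in precisely the wrong direction. This is why the paper cannot get by with the original Correlation Theorem at all in the positive-dimensional-fiber case: it must invoke the equivariant fibered power theorem of Ascher--Turchet (Proposition~\ref{Dan lemma}, i.e.\ Theorem 1.7 of \cite{8}), which asserts directly that the \emph{quotient} $\c^n_{\tilde B_0}/G$ is of general type; one then checks (Lemma~\ref{reallem1}, Appendix 1) that $\c^n_B \to \c^n_{\tilde B_0}/G$ has generic fibers of genus $g$ and concludes by Koll\'ar's subadditivity. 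The equivariant theorem is the essential new input, not a routine descent step. (Your first case, $i$ generically finite, is fine --- though the detour through a "universal curve over $M$" is unnecessary: the Correlation Theorem applies directly to the family $\c_B \to B$ there.)
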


{\bf Remarks:}
\begin{enumerate} \item This is stronger than the Correlation Theorem in just one respect: we are only assuming  that the map $j : B \to M^* $ is generically finite, not that the projection $ B \to B_0 \hookrightarrow M$ is generically finite:
$$\xymatrix{B\ar[r]^j\ar[dr]^{j_0}\ar[d]^h &  M^*\ar[d]\\
B{_0}\ar[r]^{\hookrightarrow} & M}$$
\item\label{2}   There is an  obvious bifurcation: either the map $j_0 : B \to M$ is generically finite, or it has generically one-dimensional fibers. In the former case, Proposition (3.1) of [CHM] applies, and we're done; thus we can, and will, assume that the general fiber of $j_0$ has dimension 1, and more specifically that:

\begin{equation}\label{posdimfib} B \subset M^* {\rm \ is \ the\  inverse\  image\  of\  } B_0 {\rm \ in\  } M.\end{equation} 

\begin{lemma}\label{posdimfib2} Under hypothesis \ref{posdimfib} above, the morphism
\begin{equation}\label{tildes}{\tilde B} \to {\tilde B}_0\end{equation}
is a smooth morphism  with fibers that are curves   of genus $g$.\end{lemma}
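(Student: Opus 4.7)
The plan is to exhibit $\tilde B \to \tilde B_0$ as a base change of the natural morphism $\tilde k: \tilde M^* \to \tilde M$, and then to check that $\tilde k$ itself is smooth with genus-$g$ fibers. The hypothesis (\ref{posdimfib}) that $B = k^{-1}(B_0)$ is exactly what makes it possible to realize $\tilde B$ as such a base change rather than merely as a subscheme of one.

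First I would invoke the fact that for $\ell \geq 3$ (a consequence of $\ell \gg 0$) a full level-$\ell$ structure rigidifies the moduli problem, so $\tilde M$ is a fine moduli space and the forgetful morphism $\tilde M_{g,1} \to \tilde M$ is the universal curve $\c_{\tilde M} \to \tilde M$ appearing on the bottom row of (\ref{exact4}). This morphism is smooth and proper, with fibers smooth projective curves of genus $g$. Next, from the cartesian upper square of (\ref{exact1}), $\tilde M^* = \tilde M_{g,1} \times_{M_{g,1}} M^*$; and since the condition ``$p$ is a rigid point of $C$'' depends only on $C$ and not on the level structure $\lambda$, the subscheme $\tilde M^* \subset \tilde M_{g,1}$ is simply the open subscheme of $\c_{\tilde M}$ obtained by removing, fiberwise, the finitely many non-rigid points of each curve. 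In particular the restricted projection $\tilde k: \tilde M^* \to \tilde M$ is smooth, with fibers open subsets of smooth projective genus-$g$ curves---which, under the birational conventions adopted at the start of this section, counts as having genus-$g$ curve fibers.

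Finally I would exploit (\ref{posdimfib}) itself: since $B$ equals the full preimage $k^{-1}(B_0) \subset M^*$, pulling this equality back through the cartesian vertical squares of (\ref{exact1}) gives $\tilde B = \tilde k^{-1}(\tilde B_0) = \tilde M^* \times_{\tilde M} \tilde B_0$. Hence $\tilde B \to \tilde B_0$ is obtained from $\tilde k$ by base change along the inclusion $\tilde B_0 \hookrightarrow \tilde M$; smoothness and the fiberwise description are preserved under base change, which yields precisely the claim of the lemma.

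The entire argument is essentially bookkeeping. The only piece of content is the identification of $\tilde M^*$ with an open subscheme of the universal curve $\c_{\tilde M}$, which in turn rests on the fact that level-$\ell$ structure rigidifies the moduli problem once $\ell$ is large enough. I do not anticipate a substantive obstacle---the only subtlety to watch is that (\ref{posdimfib}) is needed to promote $\tilde B \subset \tilde M^*$ from just the preimage of $\tilde B_0$ inside $\tilde k^{-1}(\tilde B_0)$ to all of $\tilde k^{-1}(\tilde B_0)$, which is what is required to realize the morphism as a base change.
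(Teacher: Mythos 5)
Your proposal is correct and follows essentially the same route as the paper: both identify the fiber of ${\tilde M}^* \to {\tilde M}$ over $[(C,\lambda)]$ with the locus of rigid points of $C$ (the paper phrases this as ``tilde and star commute,'' you phrase it as ${\tilde M}^*$ being an open subscheme of the universal curve $\c_{\tilde M}$), and both then use hypothesis (\ref{posdimfib}) to make the relevant square cartesian so that ${\tilde B} \to {\tilde B}_0$ is a base change of ${\tilde M}^* \to {\tilde M}$. Your write-up is somewhat more explicit than the paper's about why the non-rigid locus is removed fiberwise and why smoothness descends through the base change, but the underlying argument is the same.
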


{\bf Proof:}  First,  the morphism  ${\tilde M}^*  \to {\tilde M}$  has the property that its fibers are curves  (whose smooth projective completions are) of genus $g$.   This is  because  ${\tilde M}$  is a fine moduli space, and the operation of ``tilde"  ($\tilde \ $) and ``star" ($*$) commute, so that the fiber of a point $[(C,\lambda)]$ in  ${\tilde M}$ is given by $([(C,\lambda)],p)$  where $p$ ranges through the locus of all rigid points of $C$.

Also, by  \ref{posdimfib},  we also have that:
\begin{equation}\label{posdimfib3}{\tilde B} \subset {\tilde M}^* {\rm \ is \ the\  inverse\  image\  of\  } {\tilde B}_0 {\rm \ in\  } {\tilde M}\end{equation}
so that 
$$\xymatrix{{\tilde B}\ar[r]^{\tilde j}\ar[d]^{\tilde h} &  {\tilde  M}^*\ar[d]\\
{\tilde B{_0}}\ar[r]^{\hookrightarrow} &{\tilde  M}.}$$

is an exact square, and therefore the fibers of ${\tilde B} \to {\tilde B}_0$ are pullbacks of the fibers of ${\tilde M}^*  \to {\tilde M}$.

%and the morphism  ${\tilde M}^*  \to {\tilde M}$ has the property that its fibers are curves of genus $g$.   

\item However if it were true  (but  it is not true, generally)   that  $h:B \to B_0$  has fibers that are  curves of genus $g$  we would then be done: a high fiber power $\c_{B_0}^n$  (over $B_0$) would be of general type by the correlation theorem, and the projection  $$\c^n_B: =\c^n_{B_0}\times_{B_0}B\  \stackrel{1\times h}{\longrightarrow}\  \c^n_{B_0}\times_{B_0}{B_0}= \  \c^n_{B_0}$$ would have fibers  that generically  are curves of genus $g$. 
so---by \cite{9}--- it would be of general type as well. Another way of thinking about  the obstruction to proving Proposition~\ref{basic lemma} is that there may not exist a tautological family over $B_0$.\end{enumerate}

   To prove Proposition  \ref{basic lemma} we use a proposition supplied by Kenneth Ascher and Amos Turchet. 
 Consider the diagonal action of $G$ on fiber powers $\c^n_{\tilde B}$ and  $\c^n_{\tilde B_0}$  (these powers being taken over ${\tilde B}$ and ${\tilde B}_0$ respectively){\footnote{See Subsection \ref{fp} below. The action of $g\in G$ is induced, in the evident way, from the action on isomorphism classes $(C,\lambda) \mapsto  (C,\lambda\cdot g)$.}}.

 %{\footnote{See Subsection \ref{fp} below. The group $G$, acting by  composition with level structures $\lambda$ induces an action on universal curves---e.g.,  on  $\c_{\tilde B} \to {\tilde B}$   equivariant with  the action on  $ {\tilde B}$. Denoting this action by the crude notation $g({\tilde b};p) := (g({\tilde b}); g(p))$ where ${\tilde b}\in  {\tilde B}$ and $p$ is a point in the curve classified by ${\tilde b}$, then by the diagonal action for any $n$  we just mean the rule  given by     $$g({\tilde b}; p_1,p_2,\dots, p_n):= (g({\tilde b}); g(p_1),g(p_2),\dots, g(p_n))$$}}.  

\begin{proposition}\label{Dan lemma}
Keeping to the notation and hypotheses of section \ref{mod}, for $n$ sufficiently large the quotient  $\c^n_{{\tilde B}_0}/G$ of $\c^n_{{\tilde B}_0}$ (under the diagonal action of $G$) is of general type.
\end{proposition}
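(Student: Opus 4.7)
The plan is in two steps: apply the classical Correlation Theorem to the family $\c_{\tilde B_0}\to\tilde B_0$, then descend the general type property along the finite quotient by $G$.

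For the first step, note that $\tilde M=\tilde M_{g,1}[\ell]$ is a fine moduli space carrying a universal family $\c_{\tilde M}\to\tilde M$ of smooth projective curves of genus $g$, and the composite classifying map $\tilde B_0\hookrightarrow\tilde M\to M$ is generically finite (the first arrow is a locally closed immersion, the second is the finite $G$-quotient). By construction $\c_{\tilde B_0}$ is the pullback of $\c_{\tilde M}$ along $\tilde B_0\hookrightarrow\tilde M$. The hypotheses of Proposition 3.1 of [\ref{1}]—the original Correlation Theorem—are therefore satisfied, and I would invoke it to conclude that for every sufficiently large $n$ the fiber power $\c^n_{\tilde B_0}$ (taken over $\tilde B_0$) is of general type.

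For the second step, the diagonal $G$-action on $\c^n_{\tilde B_0}$ is induced from the natural action of $G$ on $\c_{\tilde M}$ that covers $(C,\lambda)\mapsto(C,\lambda g)$ on $\tilde M$. Because $G$ is finite, $\c^n_{\tilde B_0}/G$ has the same dimension as $\c^n_{\tilde B_0}$, and the Kodaira dimension of a variety is preserved under finite group quotients, so general type descends. Concretely, I would pass to a $G$-equivariant resolution $Y\to\c^n_{\tilde B_0}$; the $G$-invariant pluricanonical sections on $Y$ separate points and tangents on a dense open and, in degree divisible enough to absorb the ramification of $Y\to Y/G$, descend to pluricanonical sections on a resolution of $Y/G$, giving birationality of the corresponding pluricanonical map.

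The step I expect to be the main obstacle is this descent: one must verify that the stabilizer loci of the (possibly non-free) diagonal $G$-action on $\c^n_{\tilde B_0}$ do not destroy general type, which is a standard but nontrivial Kawamata--Viehweg type ramification calculation. In a finished writeup I would either cite a precise reference (for instance, Kawamata's work on orbifold pluricanonical forms, or the discussion of quotients in Koll\'ar--Mori) or, equivalently, work directly with Kodaira dimension, whose invariance under finite quotients is manifest. Once that point is granted, the proposition follows immediately from the Correlation Theorem applied to the locally closed immersion $\tilde B_0\hookrightarrow\tilde M$.
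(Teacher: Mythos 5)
Your first step is fine and matches the setup: $\tilde B_0\hookrightarrow\tilde M\to M$ is generically finite, $\c_{\tilde B_0}$ is pulled back from the universal family, and the original Correlation Theorem gives that $\c^n_{\tilde B_0}$ itself is of general type for $n\gg 0$. The gap is in your second step, and it is not a technical loose end but the entire content of the proposition. The claim that ``the Kodaira dimension of a variety is preserved under finite group quotients'' is false: Kodaira dimension can only drop (invariant pluricanonical forms are a subspace of all pluricanonical forms), and it genuinely does drop in examples --- a genus $2$ curve modulo its hyperelliptic involution is $\pp^1$. So general type does \emph{not} descend along finite quotients in general, and the Kawamata--Viehweg ramification estimate you defer to is exactly where the argument can fail; there is no general theorem to cite that makes a quotient of a general type variety by a finite group again of general type.

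That this is a real threat in the present situation, not just a hypothetical one, is visible in the paper's own Appendix 1: the fibers of $\c_{{\tilde B}_0}/G\to B_0$ are the quotient curves $C/\Gamma$ (with $\Gamma=\Aut(C)$), which typically have much smaller genus and may well be rational. The paper therefore does not argue by descent at all; its proof of this proposition is a direct appeal to Theorem 1.7 of Ascher--Turchet (the fibered power theorem for pairs of log general type, in the case $D=0$), whose novelty --- as the paper explicitly remarks --- is precisely the incorporation of the group $G$: one must produce enough $G$-\emph{invariant} pluricanonical sections on the fibered power by a Viehweg-style positivity argument, not deduce their existence from the non-equivariant statement. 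To repair your write-up you would either have to reprove that equivariant statement (essentially redoing Ascher--Turchet) or cite it, at which point your first step becomes unnecessary.
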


\begin{proof}
This is just Theorem 1.7  in \cite{8}, in the special case $D=0$. (The hypotheses in \cite{8} require that the base $B$ be smooth and projective, but we can always achieve this by completing the family, applying stable reduction and resolving the singularities of the new base. If a base change is required in the process of stable reduction, the group of the cover can be incorporated in $G$.)
It should be noted that a major part of the work in \cite{8} is to extend the original theorem  to the setting of log varieties, which does not concern us; what is new and useful for us is the incorporation of the group $G$.
\end{proof}
\subsection{Fiber Powers}\label{fp}
The group  $G$ acts equivariantly on the objects in the exact diagram

\begin{equation}\label{exact7}\xymatrix{ \c_{\tilde B}\ar[r]\ar[d] & \c_{{\tilde B}_0}\ar[d] \\
{\tilde B}\ar[r]  &{\tilde B}_0}.\end{equation}   
\noindent   The square (\ref{exact7}) is exact since the  $\c$'s involved  are the universal families of curves over ${\tilde B}\to {\tilde B}_0$ (that is, pullbacks of the universal family over the fine moduli space ${\tilde M}_g$).  For any $n\ge 1$ let    $$\c^n_{\tilde B}:= \c_{\tilde B}\times_{\tilde B}\c_{\tilde B}\times_{\tilde B}\dots \times_{\tilde B}\c_{\tilde B},$$  i.e., the $n$-fold  power of $ \c_{\tilde B}$ over ${\tilde B}$, with the group $G$ acting on $\c^n_{\tilde B}$ by the  diagonal action. This action is equivariant for the natural projection $\c^n_{\tilde B}\to {\tilde B}$.   The 
map \begin{equation}\label{Gtor}
\c^n_{\tilde B} \to \c^n_{\tilde B_0}\end{equation}  is a morphism of $G$-torsors.

\begin{lemma}\label{fp1}
 For $n \ge 1$ the natural map   $\c^n_{\tilde B} \to \c^n_B$ identifies  $\c^n_B$  (the corresponding fiber power $\c^n_B$ of our original family $\c \to B$)  with $\c^n_{\tilde B}/G$,  the quotient of  $\c^n_{\tilde B}$ by the action of $G$. \end{lemma}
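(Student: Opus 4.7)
The plan is to reduce the statement to the basic fact that $\c_{\tilde B}\to \c_B$ is a $G$-torsor and that forming fiber powers commutes with base change. The key structural observation, supplied by diagram~(\ref{exact4}) and the exactness of its top squares, is that
\[
\c_{\tilde B} \;\cong\; \c_B\times_B \tilde B,
\]
with the $G$-action on $\c_{\tilde B}$ induced purely from the $G$-action on the second factor $\tilde B$ (since the $G$-torsor structure of $\c_{\tilde B}\to\c_B$ is pulled back, via $\c_B\to B$, from the $G$-torsor $\tilde B\to B$).

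First, I would verify by induction on $n$ the identification
\[
\c^n_{\tilde B} \;\cong\; \c^n_B\times_B \tilde B,
\]
where the power on the left is taken over $\tilde B$ and the power on the right over $B$. The case $n=1$ is the displayed isomorphism above; the inductive step uses associativity of the fiber product and the identity $\tilde B\times_{\tilde B}(\c_B\times_B\tilde B) = \c_B\times_B\tilde B$ to rewrite
\[
\c^{n}_{\tilde B} \;=\; \c^{n-1}_{\tilde B}\times_{\tilde B}\c_{\tilde B}
 \;\cong\; (\c^{n-1}_B\times_B\tilde B)\times_{\tilde B}(\c_B\times_B\tilde B)
 \;\cong\; \c^n_B\times_B\tilde B.
\]
Tracing the $G$-action through this chain of isomorphisms, the diagonal action of $G$ on $\c^n_{\tilde B}$ corresponds precisely to the action of $G$ on the single factor $\tilde B$ (with trivial action on $\c^n_B$), since each factor of $\c_{\tilde B}$ contributes its copy of $\tilde B$ and all these copies are identified in the fiber product.

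Finally, I would quotient by $G$. Because the $G$-action is trivial on the first factor and because $\tilde B\to B$ is a $G$-torsor, formation of the quotient commutes with the base change $\cdot\times_B(-)$, yielding
\[
\c^n_{\tilde B}/G \;\cong\; \c^n_B\times_B(\tilde B/G) \;\cong\; \c^n_B\times_B B \;\cong\; \c^n_B,
\]
and one checks that this isomorphism is the natural map $\c^n_{\tilde B}\to \c^n_B$ asserted in the lemma. The only point requiring any care is the commutation of quotient with fiber product in the last display; this is standard for a $G$-torsor with $G$ finite (indeed $G = \GL_{2g}(\F_\ell)$ acts freely on $\tilde B$), so I do not anticipate a serious obstacle. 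The main conceptual step is the bookkeeping in the second paragraph---recognizing that the diagonal $G$-action on the $n$-fold fiber power over $\tilde B$ collapses to a single $G$-action on $\tilde B$ once one identifies $\c^n_{\tilde B}$ with $\c^n_B\times_B\tilde B$.
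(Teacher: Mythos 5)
Your proposal is correct and follows essentially the same route as the paper: both establish the identification $\c^n_{\tilde B}\cong \c^n_B\times_B\tilde B$ by induction on $n$ using compatibility of fiber products with base change, observe that under this identification the diagonal $G$-action corresponds to the action on the $\tilde B$ factor alone, and conclude by passing to the quotient via $\tilde B/G = B$. Your added remark on why the quotient commutes with the fiber product (freeness of the $G$-action on $\tilde B$) is a point the paper leaves implicit, but it does not change the argument.
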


{\bf Proof:}  The  natural map referred to arises from the the following natural map, valid for any three schemes over a scheme $S$, call them 
$$\xymatrix{ X\ar[rd]   &  {\tilde S}\ar[d] & Y\ar[ld]\\
\ & S & \ }.$$

Put: ${\tilde X}:=X\times_S{\tilde S}$ and  ${\tilde Y}:=Y\times_S{\tilde S}.$
We have   canonical isomorphisms of ${\tilde S}$-schemes:  $$X\times_SY\times_S{\tilde S}\simeq  (X\times_S{\tilde S})\times_{\tilde S}(Y\times_S{\tilde S})\simeq {\tilde X}\times_{\tilde S}{\tilde Y},$$

E.g., on points $x, {\tilde s}, y$ of $X, {\tilde S}, Y$ all of which map to the same  point  $s$ of $S$,  it's given by $$x \times y \times {\tilde s}  \mapsto  (x\times {\tilde s})\times (y \times {\tilde s}).$$

 Proceeding inductively on $n$ this gives us a canonical isomorphism
   \begin{equation}\label{nfold}\c_B^n\times_B{\tilde B}:=\ \ \ \c_B\times_B\c_B\times_B\dots \c_B\times_B{\tilde B}\ \ \    \stackrel{\simeq}{\longrightarrow} \ \ \ \c^n_{\tilde B}:= \ \ \  \c_{\tilde B}\times_{\tilde B}\c_B\times_{\tilde B}\dots \c_B\times_B{\tilde B}, \end{equation}
   
   \noindent by taking $S:=B$, ${\tilde S}:={\tilde B}$, $X :=  \c_B$,  $Y:= \c_B^{n-1}$.  Equation (\ref{nfold}) is an equivariant isomorphism for the action of the group $G$,  which acts diagonally on the right hand side  and as for the left hand side,  an element $g \in G$ acts on the fiber product $\c_B^n\times_B{\tilde B}$ by the identity on the first factor  (and as it has  been defined to act, on the second). 
   The map  ${\tilde B}\to B={\tilde B}/G$    (i.e., the map  that exhibits $B$ as the quotient of ${\tilde B}$ under the action of $G$) induces a mapping $\c_B^n\times_B{\tilde B}\to  \c_B^n\times_BB=  \c_B^n$.
   
     Since the quotient of  ${\tilde B}$ under the action of $G$ is $B$, the quotient of   $\c_B^n\times_B{\tilde B}$  under the action of $G$ is $B$ is canonically isomorphic to  $\c_B^n$, and we have the commutative diagram:
  $$ \xymatrix{ \c_B^n\times_B{\tilde B}\ar[r]^{\simeq}\ar[d] &  \c^n_{\tilde B}\ar[d]\\
 \c_B^n\ar[r]^{\simeq} & \c^n_{\tilde B}/G}.$$
.

 We also have the following lemma:
\begin{lemma}\label{reallem1}
   For $n \ge 1$
the fibers of the  map of quotients by the action of $G$ 
\begin{equation}
\c^n_{\tilde B}/G \to \c^n_{\tilde B_0}/G\end{equation}
are {\it generically} curves of genus $g$.
 \end{lemma}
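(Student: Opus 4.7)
The idea is to reduce Lemma~\ref{reallem1} to Lemma~\ref{posdimfib2} by two successive identifications: first, relating the fibers of $\c^n_{\tilde B} \to \c^n_{\tilde B_0}$ to the fibers of ${\tilde B} \to {\tilde B}_0$, and then descending through the free action of $G$ to get the fibers of the map of quotients.

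First I would identify $\c^n_{\tilde B}$ with the pullback $\c^n_{\tilde B_0} \times_{\tilde B_0} {\tilde B}$. Using hypothesis (\ref{posdimfib3}), that ${\tilde B}$ is the preimage of ${\tilde B}_0$ in ${\tilde M}^*$, together with the fact that ${\tilde M}$ is a fine moduli space, both $\c_{\tilde B}$ and $\c_{\tilde B_0}\times_{\tilde B_0}{\tilde B}$ arise as the pullback of the universal family $\c_{\tilde M}$ along ${\tilde B} \to {\tilde B}_0 \hookrightarrow {\tilde M}$, so they are canonically isomorphic. The same $n$-fold fiber-product manipulation used in Lemma~\ref{fp1} then yields a $G$-equivariant isomorphism
\[
\c^n_{\tilde B} \;\cong\; \c^n_{\tilde B_0} \times_{\tilde B_0} {\tilde B}.
\]
From this the projection $\c^n_{\tilde B} \to \c^n_{\tilde B_0}$ is a base change of ${\tilde B}\to{\tilde B}_0$, and its fiber over a point $x \in \c^n_{\tilde B_0}$ is canonically the fiber of ${\tilde B} \to {\tilde B}_0$ over the image of $x$ in ${\tilde B}_0$---which by Lemma~\ref{posdimfib2} is generically a smooth curve of genus $g$.

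Next I would descend through the $G$-quotient. The action of $G$ on ${\tilde B}_0$ is free because ${\tilde B}_0 \to B_0$ is a $G$-torsor, and equivariance of the structure maps $\c^n_{\tilde B_0}\to{\tilde B}_0$ and $\c^n_{\tilde B}\to{\tilde B}$ forces $G$ to act freely on the fiber powers as well (a fixed point upstairs would project to a fixed point on the base). Fixing a lift $x\in\c^n_{\tilde B_0}$ of a given orbit $[x]\in \c^n_{\tilde B_0}/G$, the preimage of $[x]$ in $\c^n_{\tilde B}$ is the $G$-saturation $G\cdot F_x$ of the fiber $F_x$ of $\c^n_{\tilde B}\to\c^n_{\tilde B_0}$ over $x$; the translates $g\cdot F_x$ lie over the distinct points $g\cdot x$ and so are pairwise disjoint. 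Hence $G\cdot F_x\cong G\times F_x$, and its quotient by $G$ is canonically $F_x$. Combined with the previous step, this identifies the fiber of $\c^n_{\tilde B}/G \to \c^n_{\tilde B_0}/G$ over $[x]$ with a generic curve of genus $g$.

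The geometric content is entirely supplied by Lemma~\ref{posdimfib2}; the only real obstacle is bookkeeping, namely verifying that freeness of the $G$-action propagates from ${\tilde B}_0$ to $\c^n_{\tilde B_0}$ (and likewise upstairs) and that all the fiber-product identifications are genuinely $G$-equivariant. Once those checks are in place the lemma follows.
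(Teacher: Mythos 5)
Your overall skeleton---identify $\c^n_{\tilde B}$ with $\c^n_{{\tilde B}_0}\times_{{\tilde B}_0}{\tilde B}$ so that the fibers of $\c^n_{\tilde B}\to\c^n_{{\tilde B}_0}$ are the fibers of ${\tilde B}\to{\tilde B}_0$ (curves of genus $g$ by Lemma~\ref{posdimfib2}), then descend through the $G$-quotient---matches the paper's, and the first half is fine. But the descent step rests on a false premise: you assert that ${\tilde B}_0\to B_0$ is a $G$-torsor and hence that $G$ acts freely on $\c^n_{{\tilde B}_0}$. This fails exactly in the situation the paper exists to handle, namely when the curves parametrized by $B_0$ have nontrivial automorphisms. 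Only ${\tilde M}^*\to M^*$ is a $G$-torsor (rigidity of the marked point kills the stabilizers); by contrast the fiber of ${\tilde B}_0\to B_0$ over a point classifying a curve $C$ with automorphism group $\Gamma$ is $G/\Gamma$, not a $G$-orbit of size $|G|$---this is the first computation in Appendix 1---and a point $(C,p_1,\dots,p_n;\lambda)$ of $\c^n_{{\tilde B}_0}$ at which all the $p_i$ are fixed by some nontrivial $\gamma\in\Gamma$ has nontrivial stabilizer in $G$. At such a point your claim that the translates $g\cdot F_x$ are pairwise disjoint, so that $G\cdot F_x\cong G\times F_x$ with quotient $F_x$, breaks down: the quotient there is $F_x/H$ for the nontrivial stabilizer $H$. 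Assuming the action is free amounts to assuming $\Gamma=1$, which begs the question.

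Since the lemma only claims the fibers are \emph{generically} curves of genus $g$, your argument is repairable: the locus of $\c^n_{{\tilde B}_0}$ with nontrivial stabilizer is a proper closed subset (a nontrivial automorphism of a curve of genus $g\ge 2$ fixes only finitely many points), and over its complement the torsor argument you give is valid. But establishing that, and identifying the fiber there, is precisely the content of the paper's Appendix 1: it reduces to the case where $B_0$ is a point, writes ${\tilde B}_0\cong G/\Gamma$, exhibits the relevant map as $(C^*\times C^*)/\Gamma\to C^*/\Gamma$, and uses the rigidity of the marked point (equivalently, triviality of its stabilizer in $\Gamma$) to show the fiber over the image of a rigid point is a full copy of $C^*$ rather than a quotient of it. Without the restriction to that open locus and the accompanying stabilizer computation, your proof is incomplete at its one essential step.
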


   The proof of the above lemma is given in Appendix 1  (Section  \ref{app1}) below.
   
   \vskip10pt 
{\bf Proof of Proposition \ref{basic lemma}: }  By Proposition \ref{Dan lemma} we have that   for $n \gg 0$  $\c^n_{{\tilde B}_0}/G$ is of general type. By Lemmas \ref{fp1} and  the above lemma,  the mapping  $\c^n_B  \to \c^n_{\tilde B_0}/G$ \   has fibers that are curves of genus $\geq 2$; i.e., that are of general type. By \cite{9}, it follows that   $\c^n_B$ is of general type.
% et $\tilde B_0$ be the preimage of $B_0$ in $M_g[l]$, and let $\tilde cX_0$ be the universal family of curves with level $l$ structure over $\tilde B_0$. By the Correlation Theorem, a high fiber power $(\tilde \c_0)^n_{\tilde B_0}$ of $\tilde \c_0$ over $\tilde B_0$ will be of general type.

\section{The boundedness argument, given  Lemma~\ref{basic lemma}}\label{main argument}

Let us first state the version of the Lang conjecture we will be invoking.

\begin{conjecture}[Strong Lang]\label{SLC}
Let $X$ be a variety of general type, defined over a number field $K$. There is then a proper subvariety $Z \subset X$ such that for any finite extension $L$ of $K$, $\#(X \setminus Z)(L) < \infty$; that is, all but finitely many $L$-rational points of $X$ lie in $Z$.
\end{conjecture}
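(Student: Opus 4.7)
The Strong Lang Conjecture is a central open problem in Diophantine geometry, so any ``proof plan'' must acknowledge that what follows is a strategy rather than a completed argument. The conjecture is known only in special cases---most notably Faltings' theorem for curves of genus $\geq 2$ and Faltings' theorem for closed subvarieties of abelian varieties not containing translates of positive-dimensional abelian subvarieties---and the general case is expected to require genuinely new Diophantine input.

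First I would reduce to the case where $X$ is smooth projective with $K_X$ big. Using Hironaka's resolution of singularities, replace $X$ by a smooth projective birational model $\widetilde{X}$. The morphism $\widetilde{X} \to X$ is an isomorphism outside a proper closed subset, so finiteness of $L$-rational points outside a proper subvariety is equivalent on the two spaces---the exceptional locus and its image can be absorbed into the eventual $Z$. Being of general type is a birational invariant, so $K_{\widetilde{X}}$ is big, and without loss of generality we may assume $X = \widetilde{X}$ is smooth projective with $K_X$ big.

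The core approach I would take is via Vojta's Main Conjecture, which is known to imply Strong Lang. In its height-theoretic form it predicts that for a smooth projective $X$ over $K$, an ample divisor $A$, a normal crossings divisor $D$, and any $\epsilon > 0$, there exist a proper Zariski-closed $Z \subset X$ and a constant $c$ such that for every algebraic point $P \notin Z$ of bounded degree,
\[ h_{K_X + D}(P) \;\leq\; \epsilon\, h_A(P) + d(P) + c, \]
where $d(P)$ is a logarithmic discriminant term. Taking $D = 0$ and using that $K_X$ is big (so $h_{K_X} - \delta h_A$ is bounded below on a nonempty Zariski open, for some $\delta > 0$), one concludes that $h_A(P) = O(1)$ for $L$-rational points outside a suitably enlarged $Z$, hence there are only finitely many such points---exactly the Strong Lang conclusion, and uniformly over all finite extensions $L/K$ of bounded degree.

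The fundamental obstacle is establishing Vojta's inequality itself. Faltings' method---embedding a product $X^n$ in a suitable ambient variety, applying the product theorem, and doing arithmetic intersection theory on the Arakelov compactification---succeeds when $X$ sits inside an abelian variety, because the group law and the N\'eron--Tate height allow simultaneous control of archimedean and non-archimedean contributions. For a general variety of general type no such structure is available: one would need an intrinsic way to produce many global sections of $nK_X$ with controlled arithmetic behaviour at every place. The parallel route via Lang's hyperbolicity conjecture (general type $\Rightarrow$ Kobayashi hyperbolic $\Rightarrow$ Mordellic) faces the same wall, since promoting a transcendental hyperbolicity statement to an arithmetic finiteness statement is precisely what is missing. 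I would therefore flag the main obstacle as conceptual rather than computational: the plan above is really a reduction of Strong Lang to Vojta's Main Conjecture, and the latter is itself open in essentially the same generality.
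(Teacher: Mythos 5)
This statement is Conjecture~\ref{SLC} in the paper: it is an \emph{assumption}, not a theorem, and the paper offers no proof of it --- the entire point of the paper is to derive uniform boundedness \emph{conditionally} on it. You are therefore right not to claim a proof, and your framing is accurate: the conjecture is open, the known cases (Faltings for curves and for subvarieties of abelian varieties) do not cover general type varieties, and the standard conditional route is through Vojta's Main Conjecture. Your two reduction steps are both correct and standard: general type is a birational invariant, so one may pass to a smooth projective model and absorb the exceptional locus into $Z$; and Vojta's height inequality with $D=0$, combined with bigness of $K_X$, does yield $h_A(P)=O(1)$ off a proper closed subset and hence finiteness (with the caveat that the uniformity in $L$ requires the version of Vojta's conjecture for algebraic points of bounded degree, with the discriminant term, rather than the fixed-field version). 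But as you say yourself, this is a reduction of one open conjecture to another of comparable depth, not a proof. Since the paper's role for this statement is purely as a hypothesis, there is no proof to compare against; the only substantive check is that you have not inadvertently claimed more than is known, and you have not.
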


Given this and Proposition~\ref{basic lemma}  of section 3, we can deduce the 

\begin{theorem}\label{main theorem}
Assume Conjecture~\ref{SLC} above. If $\pi : \c \to B$ is a family of pointed curves without automorphisms, defined over $\Q$, such that the induced map $\phi : B \to M^*$ is finite, then there is then an integer $N$ such that for any number field $K$,
$$
\# \{ b \in B(K) \mid \#C_b(K) > N\} < \infty
$$
\end{theorem}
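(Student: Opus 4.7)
The plan is to combine Proposition~\ref{basic lemma} with an iterated application of Conjecture~\ref{SLC}, along the lines originally envisioned in~[\ref{1}]. The hypothesis that $\phi : B \to M^*$ is finite ensures in particular that $j$ is generically finite, so Proposition~\ref{basic lemma} produces an integer $n$ for which $\c^n_B$ is a variety of general type defined over $\Q$. Conjecture~\ref{SLC} then supplies a proper closed subvariety $Z \subsetneq \c^n_B$ with the property that $(\c^n_B \setminus Z)(L)$ is finite for every number field $L$.

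The connection with the theorem comes through the projection $\pi_n : \c^n_B \to B$. For any $b \in B(K)$, each ordered $n$-tuple of $K$-rational points of $C_b$ yields a $K$-rational point of $\c^n_B$ lying over $b$. If at least one such tuple lies in the complement $\c^n_B \setminus Z$, then $b$ belongs to the finite set $\pi_n\bigl((\c^n_B \setminus Z)(K)\bigr)$, and only finitely many such $b$ can occur. It therefore suffices to bound, independently of $K$, those $b$ for which \emph{every} $n$-tuple of $K$-rational points of $C_b$ lies in $Z$. For this we decompose $Z$ into irreducible components: components whose image under $\pi_n$ is a proper closed subvariety $B' \subsetneq B$ are handled by Noetherian induction on the base; components which dominate $B$ meet the general fiber $C_b^n$ in a proper subvariety, and a standard pigeonhole argument on the coordinate projections $C_b^n \to C_b^{n-1}$ shows that once $\#C_b(K)$ exceeds a bound depending only on the geometry of $Z$, some $n$-tuple of distinct $K$-points of $C_b$ must avoid $Z$. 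For the dominant components of $Z$ one then applies Conjecture~\ref{SLC} again (after passing to desingularizations of their general-type pieces), obtaining further exceptional loci, and iterates; the recursion terminates because the ambient dimension drops strictly at each step.

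The main obstacle is the bookkeeping of this recursion and, in particular, ensuring that the final bound $N$ is independent of $K$. This is exactly where the full strength of Conjecture~\ref{SLC} enters: the exceptional subvarieties produced by Strong Lang are uniform across all finite extensions, and the pigeonhole bounds arising at each stage are purely geometric (depending on degrees and dimensions of the intermediate subvarieties, not on $K$). The integer $N$ is then obtained as the maximum of the finitely many bounds generated during the recursion. With Proposition~\ref{basic lemma} now established in its strengthened form (using $M^*$ rather than $M$), the rest of the argument of~[\ref{1}] carries through as originally intended.
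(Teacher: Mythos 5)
Your opening is exactly the paper's: one application of Proposition~\ref{basic lemma} to show $\c^n_B$ is of general type, one application of Conjecture~\ref{SLC} to produce $Z \subsetneq \c^n_B$, and the observation that any $b$ over which some $K$-rational $n$-tuple escapes $Z$ lies in a finite set. The divergence --- and the gap --- is in how you treat the points $b$ for which every tuple lands in $Z$. Your proposed second round of Conjecture~\ref{SLC}, applied to ``desingularizations of the general-type pieces'' of the dominant components of $Z$, is a step that does not work: the Strong Lang Conjecture says nothing about a subvariety unless it is of general type, and the components of $Z$ need not contain any general-type piece (they could, for instance, be uniruled). The paper never applies Conjecture~\ref{SLC} to $Z$ at all; it is applied exactly once per stage of a Noetherian induction on the base $B$, and everything else is elementary counting.

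That counting is the content you wave at with ``a standard pigeonhole argument,'' and it is worth making explicit because it is where the uniform $N$ actually comes from. One defines a descending tower $Z = Z_n \supset Z_{n-1} \supset \cdots \supset Z_0$ by $Z_{k-1} = \{x \in \c^{k-1}_B \mid \pi_{k,k-1}^{-1}(x) \subset Z_k\}$, lets $d_k$ be the sum of the degrees over their images of those components of $Z_k$ that are not preimages from $\c^{k-1}_B$, and sets $N = \max_k d_k$. If $\#C_b(K) > N$ and every $n$-tuple lies in $Z$, then more than $d_n$ points sit in a single fiber of $\pi_{n,n-1}$ inside $Z$, so that whole fiber lies in $Z$ and the corresponding $(n-1)$-tuple lies in $Z_{n-1}$; iterating gives $b \in Z_0$. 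Note also that the exceptional locus $Z_0$ is not simply the union of the images of the non-dominant components of $Z$, as your dichotomy suggests: a component dominating $B$ can still contain entire fibers of the coordinate projections over a proper closed subset of the base, and it is the iterated tower that captures this. Since $Z_0 \subsetneq B$, the desired bound holds on $U = B \setminus Z_0$, and Noetherian induction on $B$ --- the only recursion needed --- finishes the proof.
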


\begin{proof}
We will prove an a priori weaker form of this: we will show that there exists a nonempty open subset $U \subset B$ and an integer $N$ such that for any number field $K$,
$$
\# \{ b \in U(K) \mid \#C_b(K) > N\} < \infty;
$$
Theorem~\ref{main theorem} will then follow by Noetherian induction.

To prove this, let $\pi_n : \c^n_B \to B$ be the $n$th fiber power of the family $\c \to B$. By Lemma~\ref{basic lemma}, for large $n$ the fiber power $\c^n_B$ will be of general type. By the Strong Lang Conjecture, then, there will be a proper subvariety $Z \subset \c^n_B$ such that for any number field $K$, all but finitely many $K$-rational points of $\c^n_B$ lie in $Z$; that is,
$$
\#(\c^n_B \setminus Z)(K) < \infty.
$$ 

We now define a sequence of subvarieties $Z_k \subset \c^k_B$ inductively as follows. We start with $Z_n = Z$, and let
$$
Z_{n-1} = \{ b \in \c^{n-1}_B \mid \pi_{n, n-1}^{-1}(b) \subset Z_n \}
$$
where $\pi_{n,n-1} : \c^n_B \to \c^{n-1}_B$ is the projection; similarly, given $Z_k$ we set
$$
Z_{k-1} = \{ b \in \c^{k-1}_B \mid \pi_{k,k-1}^{-1}(b) \subset Z_k \}
$$
where $\pi_{k,k-1} : \c^k_B \to \c^{k-1}_B$ is the projection. We arrive at a tower of spaces and closed subvarieties:

\small{\begin{diagram}
& & Z = Z_n & \subset & \c^n_B \\
& & & & \dTo_{\pi_{n,n-1}} \\
& & Z_{n-1} & \subset &\c^{n-1}_B \\
& & & & \dTo_{\pi_{n-1, n-2}} \\
& & & & \vdots \\
& & & & \dTo_{\pi_{2,1}} \\
& & Z_1 & \subset & \c \\
& & & & \dTo_{\pi = \pi_{1,0}} \\
& & Z_{0} & \subset & B \\
\end{diagram}}

\noindent where the $k$-th story in this tower has the structure:

\small{\begin{diagram}
& & & & \vdots \\
& & & & \dTo_{\pi_{k+1,k}} \\
\pi^{-1}_{k,k-1}(Z_{k-1})\quad & \subset & Z_{k} & \subset &\c^{k}_B \\
& \rdTo & & & \dTo_{\pi_{k,k-1}} \\
& & Z_{k-1} & \subset &\c^{k-1}_B \\
& & & & \dTo_{\pi_{k-1, k-2}} \\
& & & & \vdots \\
\end{diagram}}

Note that since $Z \subsetneq \c^n_B$ and $\pi_n^{-1}(Z_0) \subset Z$, we necessarily have $Z_0 \subsetneq B$.

Now fix for the moment a value of $k$ with $1 \leq k \leq n$. Every irreducible component $W_\alpha \subset Z_k$  will  either be the preimage of a subvariety in $\c^{k-1}_B$, or will map onto its image in $\c^{k-1}_B$ with degree $d_\alpha$. Let $d_k$ be the sum of the degrees $d_\alpha$, so that for any $p \in \c^{k-1}_B$, either $\#(\pi_{k,k-1}^{-1}(p)\cap Z_k) \leq d_k$, or $\pi_{k,k-1}^{-1}(p) \subset Z_k$.

Finally, let $N$ be the maximum of the $d_k$, and set $U = B \setminus Z_0$. We claim that for any number field $K$,
$$
\# \{ b \in U(K) \mid \#C_b(K) > N\} < \infty;
$$
as noted above, Theorem~\ref{main theorem} will follow by Noetherian induction. To see this, restrict our family and all fiber powers to the open subset $U \subset B$; similarly, replace $Z$ by its intersection with $\pi_n^{-1}(U)$. Fix a number field $K$, and let 
$$
\Sigma = \{ (\c^n_U \setminus Z)(K) \},
$$
and let $\Sigma_0 = \pi_n(\Sigma)$ be its image; by hypothesis, this is a finite subset of $U$.

We claim finally that \emph{for any $b \in B(K) \setminus \Sigma_0$, we have $\#(X_b(K)) \leq N$}. To see this, let $b \in B(K)$ be any $K$-rational point, and suppose that $\#(X_b(K)) > N$.
Since $b \notin \Sigma_0$, all $K$-rational points of $\c^n_B$ lying over $b$ must lie in $Z$. Pick any $n-1$ points $p_1,\dots,p_{n-1} \in X_b(K)$, and consider the points $\{ (X_b, p_1,\dots,p_{n-1}, p) \mid p \in X_b(K) \} \subset \pi_{n,n-1}^{-1}((X_b, p_1,\dots,p_{n-1}))$ in the fiber of $\c^n_B$ over $(X_b, p_1,\dots,p_{n-1}) \in \c^{n-1}_B$. Since there are by hypothesis more than $N \geq d_n$ such points, we conclude that \emph{$Z=Z_n$ must contain the fiber of $\c^n_B$ over $(X_b, p_1,\dots,p_{n-1}) \in \c^{n-1}_B$}; in other words, $(X_b, p_1,\dots,p_{n-1}) \in Z_{n-1}$.

The same argument applies sequentially to show that $(X_b, p_1,\dots,p_{n-2}) \in Z_{n-2}$, and so on; ultimately, we deduce that $b \in Z_0$, establishing our claim.
\end{proof}
\section{Behavior of $N(g)$ as $g$ tends to $\infty$} 

For $C$  a smooth projective, irreducible curve of genus $g > 1$ defined over a number field $K$ let $Aut_K(C)$ be the group of automorphisms of $C$ defined over $K$.  The group  $Aut_K(C)$ acts naturally on the  set $C(K)$ of $K$-rational points of $C$. Let $\nu(C;K)$ denote the number of  $Aut_K(C)$-orbits in $C(K)$ under that natural action.   So, of course,   $\nu(C;K) \le |C(K)|$ and therefore any uniform upper bound established for $|C(K)|$  is valid for  $\nu(C;K)$ as well. 

Define $\nu(g)$ to be the smallest integer that has the property that for each number field $K$ there are only finitely many curves $C$ of genus $g$  defined over $K$  with  the property that  $\nu(C;K)$ is strictly greater than $\nu(g)$.  By what we have shown,  assuming SLC, $\nu(g)$ is finite  for every $g >1$.

 If one feels that there is a fair chance for Conjecture 1 to be true,  and hence for $\nu(g)$ to be finite, one might wonder about the asymptotic behavior of $\nu(g)$ as $g$ tends to infinity. Needless to say, we have no  real evidence to make any conjectures, or precise predictions, but we set:

$$\nu_*:= \liminf_{g\to \infty} \nu(g)/g$$
{and}
$$\nu^*:= \limsup_{g\to \infty} \nu(g)/g.$$

Note that curves in ${\bf P}^1\times {\bf P}^1$  of bidegree $(2,g+1)$ are of arithmetic genus $g$, and form a linear system of dimension $3(g+2) - 1$. Given  $3(g+2) - 1$ general points $p_1,\dots,p_{3g+5} \in {\bf P}^1\times {\bf P}^1({\bf Q})$, accordingly, there will be  a smooth curve $C$ defined over ${\bf Q}$ and passing through them. Moreover, since $C$ is a general hyperelliptic curve, its automorphism group is equal to $\Z/2$, consisting of the identity and the hyperelliptic involution; and since no two of the points $p_i$ lie in the same fiber of ${\bf P}^1\times {\bf P}^1$ over ${\bf P}^1$, no two are conjugate under the automorphism group of $C$. Thus we have $\nu(C,\Q) \geq 3g+5$ and hence $\nu(g) \geq 3g+5$.

 We have accordingly:

\begin{equation}\label{N} 3 \le \nu_* \le \nu^*.\end{equation}

  Some natural questions:
 
  \begin{enumerate}\item  Is $\nu^*$, or perhaps only $\nu_*$, or neither of them, finite?
  
  \item Are both inequalities in Equation \ref{N}  equalities? (or is one of them, or neither)?
  
  \item Let $M_{g,n}^*$ denote the moduli space of projective smooth curves of genus $g$ with $n$ {\it distinct} marked rigid points.  For $K$ a number field let $d_{g,n}(K)$ denote the dimension of the Zariski-closure in $M_{g,n}^*$ of the set of $K$-rational points  $M_{g,n}^*(K)$. Now define $d_{g,n}:= \max_Kd_{g,n}(K)$ where the maximum is taken over all number fields $K$.  The discussion in this note shows that the conjecture SLC implies that---for fixed  $g \ge 2$---if $n\gg_g0$,  then  $d_{g,n}=0$. What else can one say---or even just conjecture---about these dimensions?    For example, might $d_{g,n}$  be decreasing (albeit not necessarily strictly) for fixed $g$ and increasing $n$?
   \end{enumerate}

\section{Appendix 1: Proof of Lemma \ref{reallem1} }\label{app1}
Recall: 
\begin{quote}\begin{lemma}\label{reallem10}
  For $n \ge 1$
the fibers of the  map of quotients by the action of $G$ 
\begin{equation}\label{Gquot}
\c^n_{\tilde B}/G \to \c^n_{\tilde B_0}/G\end{equation}
are {\it generically}  curves  of genus $g$.
 \end{lemma}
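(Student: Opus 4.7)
The plan is to first analyze the map $f:\c^n_{\tilde B}\to \c^n_{\tilde B_0}$ before quotienting by $G$, show its fibers are already curves of genus $g$, and then exploit the fact that $G$ acts freely on a Zariski dense open of $\c^n_{\tilde B_0}$ to descend the statement to the quotient. For the pre-quotient analysis, I would observe that exactness of the square \eqref{exact7} gives the base-change identity $\c_{\tilde B}\cong \c_{\tilde B_0}\times_{\tilde B_0}\tilde B$, and then by induction on $n$ obtain
$$\c^n_{\tilde B} \;\cong\; \c^n_{\tilde B_0}\times_{\tilde B_0}\tilde B$$
(the argument is the same kind of base-change bookkeeping that was carried out in the proof of Lemma~\ref{fp1}). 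Thus $f$ is the pullback of $\tilde B\to \tilde B_0$ along the structure morphism $\c^n_{\tilde B_0}\to \tilde B_0$, and by Lemma~\ref{posdimfib2} its fibers are (generically) smooth curves of genus $g$.

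Next, I would show that for $n\ge 1$ the action of $G$ on $\c^n_{\tilde B_0}$ has trivial stabilizer on a Zariski dense open subset $V$. Unwinding the moduli description, the stabilizer of $[C,\lambda,q_1,\dots,q_n]$ in $G$ is, via the injection $\Aut(C)\hookrightarrow \GL_{2g}(\F_\ell)$ induced by $\lambda$ (injective once $\ell$ is taken larger than the uniformly bounded orders of the automorphism groups of curves in the family), identified with
$$\Aut(C;q_1,\dots,q_n) \;=\; \{\alpha\in \Aut(C)\colon \alpha(q_i)=q_i\ \text{for all}\ i\}.$$
Since $g>1$, $\Aut(C)$ is finite, and each nontrivial $\alpha\in\Aut(C)$ has only finitely many fixed points on $C$; hence for a generic choice of $q_1$ alone the intersection is already trivial. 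This produces the desired $V\subset \c^n_{\tilde B_0}$ on which $G$ acts freely; by $G$-equivariance of $f$, the action is also free on $f^{-1}(V)\subset \c^n_{\tilde B}$.

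To finish, I would take a generic point $\bar y\in \c^n_{\tilde B_0}/G$, lift it to $y\in V$, and note that its preimage in $\c^n_{\tilde B_0}$ is the full free orbit $G\cdot y$. By equivariance of $f$, the preimage of $\bar y$ in $\c^n_{\tilde B}$ is $G\cdot f^{-1}(y)$, a disjoint union of $|G|$ translates of $f^{-1}(y)$ freely permuted by $G$. Passing to the $G$-quotient identifies the fiber of $\c^n_{\tilde B}/G\to \c^n_{\tilde B_0}/G$ over $\bar y$ canonically with $f^{-1}(y)$, which by the first step is a curve of genus $g$. The main potential obstacle is the stabilizer computation: it has to be carried out for moduli equivalence classes of triples rather than naive set-theoretic data, and requires $\ell$ large enough that $\Aut(C)\hookrightarrow \GL_{2g}(\F_\ell)$ is injective on the bounded family of automorphism groups appearing, a mild hypothesis consistent with the standing assumption $\ell\gg 0$ made in Section~\ref{mod}.
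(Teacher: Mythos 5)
Your argument is correct and is essentially the paper's own proof in different packaging: the base-change identity $\c^n_{\tilde B}\cong \c^n_{\tilde B_0}\times_{\tilde B_0}\tilde B$ plays the role of the paper's identification of the fiber ${\mathcal F}$ with $C\times C^*$, and your generic-freeness-plus-descent step is precisely the paper's computation of the fibers of $(C^*\times C^*)/\Gamma \to C^*/\Gamma$ in Appendix 1, both resting on the fact that a generic (i.e.\ rigid) point of a curve of genus $\ge 2$ has trivial stabilizer in $\Aut(C)$. The only differences are cosmetic --- the paper reduces explicitly to $B_0$ a point and treats $n=1$ before asserting the general case, while you keep $n$ arbitrary; and your parenthetical justification of the injectivity of $\Aut(C)\hookrightarrow \GL_{2g}({\bf F}_\ell)$ is not quite the right statement (it is Serre's lemma, valid for $\ell \ge 3$), though that injectivity is in any case part of the paper's standing choice of $\ell$.
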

\end{quote} 

The statement of Lemma  \ref{reallem10} being geometric, we work over $\C$; and since we are only interested in fibers, we may assume that  $B_0$ is a point.   This point $B_0$   (in ${\mathcal M}_g$)  classifies  a single isomorphism class of curves (of genus $g>1$); call one curve in that isomorphism class  $C$.  If we want to refer to  that isomorphism class  as a whole, we'll denote it $[C]$.
   \subsection{ What is ${\tilde B}_0$?} Consider now  ${\tilde B}_0$ which  classifies isomorphism classes of pairs $(C,\lambda)$  where $C$ is a curve in the isomorphism class $[C]$ equipped with a level structure $\lambda$  on it. We have chosen our level structure so that such pairs are rigid: $C$ has no nontrivial automorphisms that preserve that level structure $\lambda$.  Let $G$ be, as we had before, the group of automorphisms of the level structure.  

 More specifically,  for any curve $X$ (of our fixed genus $g >1$) we have specified an  $\ell$ such that  no automorphism of a curve of genus $g$ leaves fixed a basis of $H_1(X, Z/\ell Z)\simeq  (Z/\ell Z)^{2g}$.  By definition a {\it level structure}  on $X$ is a specific isomorphism   $H_1(X, Z/\ell Z)\stackrel{\lambda}{\longrightarrow}   (Z/\ell Z)^{2g}$; and $G= GL_{2g}(Z/\ell Z)$ acts naturally on level structures (by right-composition:  $\lambda\mapsto \lambda\cdot g^{-1}$); hence---since $B_0$ is just one point---$G$ acts transitively on the set  ${\tilde B}_0$.  

 Consider $\Gamma:=$ the full automorphism group of the curve $C$ (the curve classified by the point $B_0$).  Any automorphism of a curve $X$ induces an automorphism of  $H_1(X, \Z/\ell\Z)$ and so induces a permutation of level structures on $X$. Fixing such a curve $X=C$ we get a homomorphism  $\Gamma \to G$; it is injective since the curve $C$  with a level structure is rigid.  In other words---given our fixed curve $C$--- the image of $\Gamma$ in $G$  is the isotropy subgroup of $G$ relative to its (transitive) action on the finite set  ${\tilde B}_0$.  Consequently,

 \begin{lemma} Making a choice of curve and  level structure $(C, \lambda)$ there is a natural identification;
\begin{equation} {\tilde B}_0 \stackrel{\simeq}{\longrightarrow} G/\Gamma.\end{equation}\end{lemma}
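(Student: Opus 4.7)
The plan is a direct application of the orbit–stabilizer theorem to the action of $G$ on $\tilde B_0$. The two ingredients needed have already been assembled in the paragraphs immediately preceding the statement: (i) since $B_0$ corresponds to the single isomorphism class $[C]$ and $G = \mathrm{GL}_{2g}(\Z/\ell\Z)$ acts transitively on the set of level–$\ell$ structures on a fixed representative $C$, the group $G$ acts transitively on $\tilde B_0$; and (ii) the natural homomorphism $\Gamma := \mathrm{Aut}(C) \to G$ induced by the action on $H_1(C,\Z/\ell\Z)$ is injective, by the rigidity of the pairs classified by $\tilde B_0$.

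First I would fix the basepoint $(C,\lambda) \in \tilde B_0$ and identify its stabilizer in $G$. By the definition of $\tilde B_0$, two pairs $(C,\lambda)$ and $(C,\lambda')$ represent the same point iff they are related by an automorphism of $C$, i.e.\ iff there exists $\sigma \in \Gamma$ with $\sigma_* \lambda = \lambda'$. Applied to $\lambda' = \lambda \cdot g^{-1}$, this says that $g \in G$ fixes $(C,\lambda)$ precisely when $g$ lies in the image of $\Gamma \hookrightarrow G$. Hence $\mathrm{Stab}_G((C,\lambda))$ is exactly (the image of) $\Gamma$.

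Next I would invoke the standard orbit–stabilizer identification. The orbit map $G \to \tilde B_0$, $g \mapsto (C, \lambda \cdot g^{-1})$, is surjective by transitivity and has fibers precisely the left cosets $g\Gamma$; it therefore descends to a $G$–equivariant bijection
\[
G/\Gamma \;\stackrel{\sim}{\longrightarrow}\; \tilde B_0,
\]
which is the asserted natural identification. Naturality here is the statement that, once the pair $(C,\lambda)$ is fixed, the bijection is canonically determined; replacing $(C,\lambda)$ by another basepoint only conjugates $\Gamma$ inside $G$, yielding a canonically $G$–equivariantly isomorphic coset space.

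There is no genuine obstacle; the argument is formal once the two inputs (transitivity of $G$ and injectivity of $\Gamma \hookrightarrow G$) are in place, both of which are explicitly assembled in the preceding paragraphs. The one point requiring care is purely notational: one must keep track of the convention $\lambda \mapsto \lambda\cdot g^{-1}$, which turns the natural right action of $G$ on level structures into a left action, so that the quotient appears as $G/\Gamma$ rather than $\Gamma\backslash G$.
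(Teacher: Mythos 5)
Your proposal is correct and follows essentially the same route as the paper: the text preceding the lemma establishes exactly the two facts you isolate (transitivity of $G$ on ${\tilde B}_0$ since $B_0$ is a single point, and that the image of the injective map $\Gamma \to G$ is the isotropy subgroup of the chosen $(C,\lambda)$), and the identification is then the orbit--stabilizer bijection. Your write-up merely makes explicit the verification that the stabilizer is precisely the image of $\Gamma$, which the paper asserts without elaboration.
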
 

 \subsection{ What is $\c_{{\tilde B}_0}$?}
 Now let's pass to considering $\c_{{\tilde B}_0}$; i.e.,  the union of the actual curves in the isomorphism class  ``$[C]$"   with their level structures $\lambda$  (that are classified by the corresponding points $(C, \lambda)$  in the finite set ${\tilde B_0}$). A point in  $\c_{{\tilde B}_0}$ is a triple $(C, p; \lambda)$   where $C$ is---as will always be, in this discussion---`classified by' the point $B_0$, $p \in C$ and $$ (Z/\ell Z)^{2g}\stackrel{\lambda}{\longrightarrow}  H_1(C, Z/\ell Z)$$ is a level structure. There is a natural action of $G$  on $\c_{{\tilde B}_0}$.  That is: \begin{equation}\label{Gact} g(C, p;  \lambda):= (C, g(p); \lambda\cdot g^{-1}).\end{equation}

giving us $G$-equivariant mappings
\begin{equation}\label{tB}  \c_{{\tilde B}_0}   \stackrel{\pi}{\longrightarrow}   {\tilde B}_0  \simeq G/\Gamma\end{equation}
every fiber of which is a curve of genus $g$---these being just our  curves ``$C$"  with different level structures.

 \subsection{What is the quotient of $\c_{{\tilde B}_0}$ by the action of $G$?}

\begin{lemma}  Fix a curve  and level structure $(C,\lambda)$ classified by a point in ${\tilde B}_0$.  After passing to the quotient by $G$ the  ($G$-equivariant) mapping \ref{tB}  induces:
\begin{equation}\label{tB2}  \c_{{\tilde B}_0}/G   \stackrel{\pi}{\longrightarrow}   {\tilde B}_0/G  = B_0\end{equation}
 \noindent the fibers being curves isomorphic to the quotient curve $C/\Gamma$. \end{lemma}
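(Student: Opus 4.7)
The plan is to exploit the hypothesis that $B_0$ is a single point, which reduces the assertion to showing that $\c_{{\tilde B}_0}/G \cong C/\Gamma$. I would establish this by realizing $\c_{{\tilde B}_0}$ as an induced $G$-equivariant bundle over the homogeneous space ${\tilde B}_0 \cong G/\Gamma$, and then computing the quotient in two steps.

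First I would fix a base point $(C,\lambda_0) \in {\tilde B}_0$; this both supplies the identification ${\tilde B}_0 \cong G/\Gamma$ recalled just before the Lemma, and realizes $\Gamma = \Aut(C)$ as a subgroup of $G$ via its action on $H_1(C,\Z/\ell\Z)$. The universal family $\pi: \c_{{\tilde B}_0} \to {\tilde B}_0$ is a $G$-equivariant morphism whose fiber over the identity coset $e\Gamma$ is the curve $C$; the stabilizer $\Gamma \subset G$ of this base point therefore acts on that fiber. The crucial verification is that, directly from formula (\ref{Gact}), this induced $\Gamma$-action agrees with the natural action of $\Aut(C)$ on $C$.

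Next, invoking the standard principle that any $G$-equivariant fibration over a transitive $G$-set $G/\Gamma$ is canonically isomorphic to the induced bundle built from its fiber at the identity coset, I obtain a $G$-equivariant identification
\[
\c_{{\tilde B}_0} \;\cong\; G \times_\Gamma C,
\]
with $\Gamma$ acting on $G \times C$ by $\gamma\cdot(g,x) = (g\gamma^{-1}, \gamma x)$ and $G$ acting by left translation on the first factor. Because these two actions commute, the quotient $(G \times_\Gamma C)/G$ may be computed by first collapsing the $G$-factor (which leaves $C$) and then dividing by $\Gamma$, yielding $C/\Gamma$ as desired.

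The step I expect to be the main obstacle is the identification of the stabilizer action on the fiber over $e\Gamma$ with the natural $\Aut(C)$-action, since this requires carefully tracking how the formal $G$-action on triples $(C,p;\lambda)$ restricts to the isotropy subgroup that preserves $\lambda_0$, and in particular pinning down the sense of the notation "$g(p)$" in (\ref{Gact}) through the embedding $\Gamma\hookrightarrow G$. Once that identification is secured, the remainder is a routine manipulation of induced bundles and commuting quotients.
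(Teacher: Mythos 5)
Your argument is correct and is essentially the paper's own proof, worked out in more detail: the paper disposes of the lemma in one line by citing the fact that the image of $\Gamma$ in $G$ is the isotropy subgroup for the transitive action on ${\tilde B}_0$, and your induced-bundle identification $\c_{{\tilde B}_0}\cong G\times_\Gamma C$ together with the two-step quotient is precisely the standard way of cashing in that fact. The point you flag as delicate---that the stabilizer's action on the fiber over the identity coset is the natural $\Aut(C)$-action, via the formula (\ref{Gact})---is indeed the only real content, and your treatment of it is sound.
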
 
 
 {\bf Proof:}  This follows from the fact that the image of $\Gamma$ in $G$  is the isotropy subgroup of $G$ relative to its (transitive) action on ${\tilde B}_0$.
 
 \subsection{  What is $B$? } \ $B$ consists of isomorphism classes of pairs $(C,q)$  where $C$ is a curve classified by the point $B_0$  and $q $ is a {\it rigid } point on $C$.
  
  \begin{lemma}\label{diag} Fixing a curve $C$  with moduli point $B_0 \in M_g$,  let $C^*$ denote the Zariski open subset of rigid points in $C$.  We have an isomorphism
  $$B    \stackrel{\simeq}{\longrightarrow}  C^*/\Gamma.$$\end{lemma}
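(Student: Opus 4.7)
The plan is to identify $B$ with $C^*/\Gamma$ through the universal property of the fine moduli space $M^*$. Since we have reduced to $B_0$ being a single point, namely the class $[C] \in M$, hypothesis (\ref{posdimfib}) identifies $B$ with the fiber $k^{-1}([C])$ of the forgetful morphism $k : M^* \to M$.

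First I would describe the geometric points of $B$. A point of $M^*$ is an isomorphism class $[(C',p')]$ of rigidified curves, and the condition $k([(C',p')])=[C]$ forces $C' \simeq C$. After fixing such an isomorphism, every class in the fiber has a representative of the form $(C,q)$ with $q \in C^*$, and two such representatives $(C,q_1)$, $(C,q_2)$ determine the same isomorphism class if and only if some $\alpha \in \Gamma = \Aut(C)$ satisfies $\alpha(q_1)=q_2$. This yields a set-theoretic bijection between $B$ and $C^*/\Gamma$.

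Next I would upgrade this to a morphism of schemes using the universal property of $M^*$. The trivial family $C \times C^* \to C^*$, equipped with the diagonal $C^* \hookrightarrow C \times C^*$ as a marked section, is a flat family of rigidified curves over $C^*$ (the section lands in $C^*$ by construction). By representability of $M^*$ it is classified by a morphism $f : C^* \to M^*$ whose image lies in $B$. Since replacing $q$ by $\alpha(q)$ for $\alpha \in \Gamma$ produces the same isomorphism class of rigidified pair, the morphism $f$ is $\Gamma$-invariant and descends to $\bar f : C^*/\Gamma \to B$.

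The final step, which I expect to be the main (though mild) obstacle, is to confirm that $\bar f$ is an isomorphism of schemes rather than merely a bijection on points. The crucial input is that $\Gamma$ acts \emph{freely} on $C^*$: by the definition of rigid point, any $\alpha \in \Gamma$ fixing some $q \in C^*$ is the identity. Consequently the quotient $C^* \to C^*/\Gamma$ is an étale $\Gamma$-torsor, the composite $C^* \to B$ factors it through $\bar f$, and $\bar f$ becomes an étale morphism that is bijective on geometric points between reduced varieties, hence an isomorphism.
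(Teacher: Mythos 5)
Your proposal is correct and takes essentially the same route as the paper, whose entire proof is the remark that the statement is evident and that $C^*$ is a $\Gamma$-torsor over $B$ by the definition of rigidity. Your write-up simply expands that one-line observation — the free action of $\Gamma$ on $C^*$ and the resulting torsor structure are exactly the point the paper highlights.
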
 

 {\bf Proof:}   This is evident, but one might also notice that $C^*$ is a $\Gamma$-torsor over $B$, as follows from the definition of rigidity.

 \subsection{ What is  ${\tilde B}$?} 
  The cover    ${\tilde B}$ of $B$  consists of isomorphism classes of triples $(C,q;\lambda)$ with $C$ having moduli point $[C] = B_0$,  $q$ a rigid point on $C$ and $\lambda$ a level structure on $C$. Now just consider the pair $(C,\lambda)$.  This pair has no nontrivial automorphisms, so as $q$ ranges through the (rigid) points of  $C$, we get that      
  \begin{lemma}  {\it Fixing a curve $C$  with moduli point $B_0$} ,
 \begin{enumerate} \item The ($G$-equivariant) mapping  \begin{equation}\label{tB0} {\tilde B} \stackrel{\psi}{\longrightarrow}  {\tilde B}_0 =G/\Gamma\end{equation}  is surjective with fibers isomorphic to $C^*$.
 \item  The quotient of (\ref{tB0}) by the action of $G$ induces a mapping    \begin{equation}\label{tB1} {\tilde B}/G \stackrel{{\bar \psi}}{\longrightarrow}  {\tilde B}_0/G=B_0 \end{equation} with fibers isomorphic to $C^*/\Gamma$.\end{enumerate}    \end{lemma}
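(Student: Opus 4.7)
The plan is to exploit two descriptions of $\tilde B$ already assembled in the appendix --- the fiber-product formula $\tilde B = \tilde M^{*} \times_{\tilde M} \tilde B_0$ coming from (\ref{posdimfib3}), and the identification $\tilde B_0 \cong G/\Gamma$ --- to read off the fibers of $\psi$, and then pass to $G$-quotients for part (2). The key observation underlying everything is that the forgetful morphism $\tilde M^{*} \to \tilde M$ is literally the universal curve $\c_{\tilde M} \to \tilde M$ restricted to its open rigid locus, this identification being legitimate because $\tilde M$ is a fine moduli space (for $\ell \gg 0$ no nontrivial automorphism of $C$ preserves a level structure $\lambda$).

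For part (1), I would first write $\psi$ as the pullback of $\tilde M^{*} \to \tilde M$ along $\tilde B_0 \hookrightarrow \tilde M$, using the exact square (\ref{posdimfib3}). By the remark just made, the scheme-theoretic fiber of $\tilde M^{*} \to \tilde M$ over a point $[(C,\lambda)]$ is the open subvariety $C^{*} \subset C$ of rigid points, so after pulling back every fiber of $\psi$ is isomorphic to $C^{*}$. Surjectivity of $\psi$ then reduces to the fact that for $g>1$ every smooth projective curve of genus $g$ has a nonempty (indeed open dense) locus of rigid points, so every $[(C,\lambda)] \in \tilde B_0$ lies in the image of $\psi$.

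For part (2), I would use that $\tilde B \to B$ and $\tilde B_0 \to B_0$ are $G$-torsors, so passing to $G$-quotients on source and target of $\psi$ yields the induced morphism $\bar\psi \colon B \to B_0$. Since by our reduction $B_0$ is a single point, the unique fiber of $\bar\psi$ equals all of $B$, and Lemma \ref{diag} identifies this with $C^{*}/\Gamma$. One may equivalently argue equivariantly: $G$ acts transitively on $\tilde B_0 \cong G/\Gamma$ with stabilizer $\Gamma$, hence the $G$-quotient of the total space of a $G$-equivariant map to $\tilde B_0$ coincides with the $\Gamma$-quotient of a single fiber, and the $\Gamma$-action on that fiber is precisely the restriction to $C^{*}$ of the natural $\Gamma = \mathrm{Aut}(C)$ action on $C$. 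The main (and rather mild) obstacle is simply bookkeeping the fine-moduli-space property of $\tilde M$ carefully enough that all the ``fibers'' in sight may be manipulated scheme-theoretically; once that is in hand the remainder is a routine exercise in $G$-equivariant fiber products.
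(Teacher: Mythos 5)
Your proposal is correct and follows essentially the same route as the paper: the paper identifies the fiber of $\psi$ over $[(C,\lambda)]$ with $C^*$ by noting that the pair $(C,\lambda)$ is rigid (equivalently, by viewing $\tilde B \to \tilde B_0$ as the pullback of $\tilde M^* \to \tilde M$, whose fibers are the rigid loci), and obtains part (2) from the fact that $\Gamma$ is the isotropy subgroup of the transitive $G$-action on $\tilde B_0 \cong G/\Gamma$, so the $G$-quotient is the $\Gamma$-quotient of a single fiber. The only cosmetic difference is that you route part (1) explicitly through the exact square (\ref{posdimfib3}) rather than describing points of $\tilde B$ directly as triples $(C,q;\lambda)$.
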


 \subsection{ What is  $\c_{\tilde B}$?} 
   Consider  the mapping
 \begin{equation}\label{ct} \c_{\tilde B} \to {\tilde B}.\end{equation}
  
 A  point ${\tilde c}$ of $\c_{\tilde B}$ is given by an isomorphism class of $4$-tuples   $(C,q;\lambda; p)$ where  $(C,q;\lambda)$ comprises the coordinates of the point of  ${\tilde B}$ over which  ${\tilde c}$ lies, and $p\in C$ is a point of $C$. So (\ref{ct}) is a family of curves whose fibers are all isomorphic to $C$  (over the base which is isomorphic to $C^*$).
 \begin{lemma}\label{diag2}    We have an exact commutative `$G$-equivariant'  diagram
$$\xymatrix{\c_{\tilde B}\ar[r]\ar[d] &  \c_{{\tilde B}_0}\ar[d] \\
  {\tilde B}\ar[r] & {\tilde B}_0= G/\Gamma}$$ where the fibers of the vertical maps are  isomorphic to $C$ and the  fibers of the horizontal maps are  isomorphic to $C^*$.
  
 \end{lemma}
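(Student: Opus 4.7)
The plan is to read the lemma as a summary of the moduli-theoretic analysis already carried out in the preceding subsections; once the four corners of the square are translated into their functor-of-points form, commutativity, exactness, the fibre computations, and $G$-equivariance all become manifest.

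First I would spell out the moduli descriptions, under the standing assumption that $B_0$ is the point classifying the isomorphism class $[C]$: the space $\tilde B_0$ parametrises pairs $(C,\lambda)$ and is identified with $G/\Gamma$; the universal family $\c_{\tilde B_0}\to\tilde B_0$ parametrises triples $(C,p,\lambda)$ with $p\in C$, the map forgetting $p$; $\tilde B$ parametrises triples $(C,q,\lambda)$ with $q\in C^*$ a rigid point, and $\tilde B\to\tilde B_0$ forgets $q$; and $\c_{\tilde B}$ parametrises quadruples $(C,q,\lambda,p)$ with $q\in C^*$ and $p\in C$, where $\c_{\tilde B}\to\tilde B$ forgets $p$ and $\c_{\tilde B}\to\c_{\tilde B_0}$ forgets $q$. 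The action of $G$ twists the level structure $\lambda\mapsto\lambda\cdot g^{-1}$ at every corner while leaving $p$ and $q$ untouched, so all four maps are tautologically $G$-equivariant, and commutativity is immediate since both compositions $\c_{\tilde B}\to\tilde B_0$ send $(C,q,\lambda,p)$ to $(C,\lambda)$.

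The key step is exactness, and here the cleanest argument is structural. The square (\ref{exact2}) is exact; by hypothesis (\ref{posdimfib3}) $\tilde B$ is the inverse image of $\tilde B_0$ in $\tilde M^*$; and $\c_{\tilde B}$, $\c_{\tilde B_0}$ are, respectively, the pullbacks of $\c_{\tilde M^*}$ and $\c_{\tilde M}$ to $\tilde B$ and $\tilde B_0$ (these universal families exist because $\tilde M^*$ and $\tilde M$ are fine moduli spaces). Pulling back the exact square (\ref{exact2}) along the embedding $\tilde B_0\hookrightarrow\tilde M$ therefore yields the square of the lemma, which is consequently exact. One can cross-check this on points: an element of $\c_{\tilde B_0}\times_{\tilde B_0}\tilde B$ is a pair $((C,p,\lambda),(C,q,\lambda))$ with matching $(C,\lambda)$, which repackages as a quadruple $(C,q,\lambda,p)\in\c_{\tilde B}$.

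With exactness established, the fibre identifications follow at once: the fibre of $\c_{\tilde B_0}\to\tilde B_0$ over $(C,\lambda)$ is $\{p\in C\}\cong C$, and the fibre of $\tilde B\to\tilde B_0$ over $(C,\lambda)$ is $\{q\in C^*\}\cong C^*$; pulling these back along the cartesian square identifies the vertical fibres of the square with $C$ and the horizontal fibres with $C^*$. The main obstacle is less conceptual than notational: one must keep cleanly distinguished the two roles played by a ``point of $C$''---the arbitrary $p\in C$ appearing as a variable in the universal family, versus the rigid $q\in C^*$ serving as a distinguished marked section---throughout the moduli identifications, and remember that $C^*\subsetneq C$ is a proper Zariski open subset, so the horizontal fibres are strictly smaller than the vertical ones.
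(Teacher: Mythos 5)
Your proposal is correct and follows essentially the same route as the paper: both identify the four corners with their tuple descriptions $(C,q;\lambda;p)$, $(C,q;\lambda)$, $(C,p;\lambda)$, $(C,\lambda)$, read the maps as forgetful maps, and use the rigidity of $(C,\lambda)$ (equivalently, that $\tilde M^*$ and $\tilde M$ are fine moduli spaces) to conclude the fibers are literally $C$ and $C^*$. Your explicit verification of exactness by pulling back the exact square (\ref{exact2}) along $\tilde B_0\hookrightarrow\tilde M$ is a welcome elaboration of a point the paper leaves implicit, but it is not a different argument.
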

  {\bf Proof:}     The vertical map sends the point ${\tilde c}\in \c_{\tilde B}$ represented by the $4$-tuple  $(C,q;\lambda; p)$ to  the point in ${\tilde B}$ represented by the triple  $(C,q;\lambda)$ while the horizontal map sends it to  $(C, \lambda; p)$.  In either case the `retention' of  a level structure $\lambda$ (under either of these `forgetful mappings')---guaranteeing  the fact that  $(C, \lambda)$ admits no nontrivial automorphisms---tells us that the fibers of these projections are as claimed in the lemma.

\subsection{Specializing Lemma \ref{diag2} to a point  ${\tilde b}_0  \in  {\tilde B}_0 $}
Consider, now, the pullback of  the above commutative square to  a point ${\tilde b}_0  \in  {\tilde B}_0  = G/\Gamma$.    Let ${\mathcal F} \subset  \c_{\tilde B}$ denote the fiber over ${\tilde b}_0 \in  {\tilde B}_0$ of the mapping   $$\c_{\tilde B}\to   {\tilde B}_0= G/\Gamma,$$  so that the pullback of the  diagram in Lemma \ref{diag2}   to the point  ${\tilde b}_0  \in  {\tilde B}_0$ yields an exact commutative `$\Gamma$-equivariant'  diagram   \begin{equation}\label{F}\xymatrix{{\mathcal F} \ar[r]\ar[d] &  C \cong \c_{{\tilde b}_0}\ar[d] \\
C^*\cong  {\tilde B}_{{\tilde b}_0}\ar[r] & {\tilde b}_0}.\end{equation}     This diagram may be  written simply as a `$\Gamma$-equivariant' isomorphism \begin{equation}\label{equivisom}{\mathcal F} \cong C\times C^*\end{equation}  where we note that the restriction of  the action of $G$ (on $ \c_{\tilde B}$) to $\Gamma \subset G$  stabilizes ${\mathcal F}$, and the  action of $\Gamma$ on the range $C\times C^*$ is the natural diagonal action; i.e. $$\gamma(x,y) = (\gamma(x), \gamma(y)).$$

 We propose to show that the fibers of the mapping  \begin{equation}\label{newfiber}{\mathcal F}/\Gamma   \longrightarrow C/\Gamma\end{equation}  (in the quotient by the action of $\Gamma$ on the top horizontal morphism of the above diagram (\ref{F})  are (generically) curves in the isomorphism class $[C]$.   More specifically, this is true for the fibers of (\ref{newfiber}) over points in the Zariski dense open $C^*/\Gamma  \subset   C/\Gamma.$   We focus, then, on $$(C^*\times C^*)/\Gamma  \subset (C^*\times C)/\Gamma \cong {\mathcal F}.$$
\begin{lemma}
Consider the  projection  \begin{equation}\label{Gammaquot} (C^*\times C^*)/\Gamma  \to C^*/\Gamma.\end{equation}   Fixing any point $x \in C^*$, the mapping $$C^* \stackrel{\alpha}{\longrightarrow} (C^*\times C^*)/\Gamma$$ given by $$y \mapsto  \ {\rm the\ image\ of \ } (x,y)\ {\rm in\ } (C^*\times C^*)/\Gamma$$\vskip10pt  identifies $C^*$ with  the fiber   of (\ref{Gammaquot})  over the image of $x$ in  $ C/\Gamma$. \end{lemma}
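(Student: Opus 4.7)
The plan is to verify directly that $\alpha$ is a bijection onto the claimed fiber; given the setup in the preceding subsections, this reduces to bookkeeping, with the rigidity hypothesis supplying the one non-formal ingredient.

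First I would observe that the projection in (\ref{Gammaquot}) is induced by the first-coordinate map $C^* \times C^* \to C^*$, which is $\Gamma$-equivariant for the diagonal action on the source. Hence the induced projection sends $[(x,y)] \mapsto [x]$, so the image of $\alpha$ lies in the fiber over $[x]$ as claimed. Next, for surjectivity, given a point $[(a,b)]$ in that fiber we have $[a] = [x]$, so $a = \gamma(x)$ for some $\gamma \in \Gamma$; then
\[
[(a,b)] \;=\; [(\gamma(x),\,b)] \;=\; [(x,\,\gamma^{-1}(b))] \;=\; \alpha(\gamma^{-1}(b)),
\]
so $\alpha$ maps onto the entire fiber.

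For injectivity---the only step where the geometry of $C^*$ enters---I would argue as follows. Suppose $\alpha(y_1) = \alpha(y_2)$, so that $(x,y_1)$ and $(x,y_2)$ lie in a common $\Gamma$-orbit under the diagonal action. Then there is $\gamma \in \Gamma$ with $\gamma(x) = x$ and $\gamma(y_1) = y_2$. Since $x \in C^*$ is by definition a rigid point of $C$ and $\Gamma$ is the full automorphism group of $C$, the equality $\gamma(x) = x$ forces $\gamma = \mathrm{id}$, whence $y_1 = y_2$.

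The only thing beyond set-theoretic bijectivity one might worry about is that $\alpha$ identifies $C^*$ scheme-theoretically with the fiber. But $\alpha$ factors as the closed immersion $y \mapsto (x,y)$ followed by the finite quotient map $C^* \times C^* \to (C^* \times C^*)/\Gamma$, and the rigidity argument above shows that the diagonal $\Gamma$-action is free on $\{x\} \times C^*$; hence the second arrow is étale when restricted to this locus, and $\alpha$ is an isomorphism onto its image. I do not anticipate any substantive obstacle here---the lemma is essentially a direct consequence of the defining property of $C^*$ once the quotient structure on both sides has been unwound.
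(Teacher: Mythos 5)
Your proof is correct and follows essentially the same route as the paper: surjectivity by translating the first coordinate back to $x$ via some $\gamma \in \Gamma$, and injectivity from the fact that a diagonal identification of $(x,y_1)$ with $(x,y_2)$ forces $\gamma(x)=x$, which by rigidity of $x \in C^*$ gives $\gamma = \mathrm{id}$. The added observation that freeness of the action on $\{x\}\times C^*$ upgrades the set-theoretic bijection to a scheme-theoretic one is a harmless refinement beyond what the paper records.
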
 
  {\bf Proof:}  That $\alpha$ maps $C^*$ surjectively onto that fiber is clear: if $(x',y') \in C^*\times C^*$ maps to a point $z$  in that fiber, we can find a $\gamma \in \Gamma$ such that $\gamma(x')=x$.  Taking  $y :=\gamma(y')$ we have that the image of $y$ is $z$. But   $\alpha$ is also injective,  since if for $y, y' \in C^*$  there were an element $\gamma \in \Gamma$ such that $\gamma(x,y) = \gamma(x,y')$ we would have $\gamma(x) = x$, which would contradict the rigidity of the point $x\in C^*$. 
  %In particualrny of the (finitely many) fibers of the mapping   $$\c_{\tilde B}\to   {\tilde B}_0= G/\Gamma$$ is isomorphic to $C\times C^*$
  % The  `rigidity' of  pairs $(C,q)$ (i.e., nonexistence of nontrivial automorphisms)  with $q$ a rigid point in $C$ gives us a universal family   $\c_B \to B$   that fits perfectly  into our picture; i.e., we have a canonical isomorphism:
  % $$ {\c}_{\tilde B} = \c_B\times_{B}{\tilde B}.$$
\subsection{Returning to Lemma \ref{diag2}}  We are now ready to consider the quotient of the diagram in Lemma \ref{diag2} by the (equivariant) action of the group $G$.
  
  We get the  commutative ({\it but not necessarily exact}) diagram:
  
 \begin{equation}\label{F/G}\xymatrix{ \c_B\ar[d] & \c_{\tilde B}/G\ar[l]^\simeq\ar[r]^f\ar[d] &  \c_{{\tilde B}_0}/G\ar[d]^{\bar \pi} \\
  B & {\tilde B}/G\ar[l]^\simeq\ar[r]^{\bar\psi} & {\tilde B}_0/G= B_0},\end{equation}
  where  ${\bar \psi}$ has fibers isomorphic to $C^*/\Gamma$  and  ${\bar \pi}$  has fibers isomorphic to $C/\Gamma$.  The two  unlabeled vertical morphisms have fibers isomorphic to the curve $C$.

  Returning to the notation of  diagram (\ref{F/G}) we have:
  
  \begin{proposition} The fibers of the mapping  $$\c_{\tilde B}/G \stackrel{f}{\longrightarrow}   \c_{{\tilde B}_0}/G$$ are  (generically)  curves of genus $g$.\end{proposition}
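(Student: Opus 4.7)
The plan is to reduce the statement to an elementary stabilizer calculation using the identifications already developed in the preceding subsections of this appendix. Since the claim is geometric and concerns only generic fibers, we may (as throughout) specialize to the case where $B_0$ is a single point, fix a curve $C$ in the corresponding isomorphism class $[C]$, set $\Gamma=\Aut(C)$, and fix a level structure $\lambda$. Lemma \ref{diag2}, specialized to a point $\tilde b_0\in\tilde B_0$ via equation (\ref{equivisom}), furnishes a $\Gamma$-equivariant isomorphism identifying the fiber $\mathcal F$ of $\c_{\tilde B}\to\tilde B_0$ over $\tilde b_0$ with $C^*\times C$ (with $\Gamma$ acting diagonally), under which the horizontal map $\mathcal F\to C\cong\c_{\tilde b_0}$ (induced by $\c_{\tilde B}\to\c_{\tilde B_0}$) is the projection $(q,p)\mapsto p$ onto the second factor. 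Combining with the $G$-torsor structure $\tilde B_0=G/\Gamma$, one globalizes this to
\[
\c_{\tilde B}/G \;\cong\; (C^*\times C)/\Gamma, \qquad \c_{\tilde B_0}/G \;\cong\; C/\Gamma,
\]
and $f$ becomes the induced map $[(q,p)]\mapsto[p]$.

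The proposition then follows from a direct stabilizer computation along exactly the lines of the $\alpha$-lemma just proved. Fix $[p_0]\in C/\Gamma$. Every preimage in $(C^*\times C)/\Gamma$ has a representative of the form $(q,p_0)$ (use the $\Gamma$-action to move $p$ to $p_0$), and two such representatives $(q,p_0)$ and $(q',p_0)$ coincide in the quotient iff some $\gamma\in\Gamma$ satisfies $\gamma p_0=p_0$ and $\gamma q=q'$; that is, $q'\in\Gamma_{p_0}\cdot q$, where $\Gamma_{p_0}$ denotes the stabilizer. Hence the fiber of $f$ over $[p_0]$ is $C^*/\Gamma_{p_0}$. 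Since $\Gamma$ is a finite group acting faithfully on $C$, only finitely many points of $C$ have nontrivial stabilizer; for $p_0$ in the complementary Zariski-dense open the stabilizer is trivial, so the fiber is $C^*$, a Zariski open in the smooth projective curve $C$ of genus $g$.

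The main technical burden is bookkeeping: confirming that the $\Gamma$-action on $\mathcal F$ inherited from the $G$-action on $\c_{\tilde B}$ matches the diagonal action on $C^*\times C$, and that the horizontal morphism in diagram (\ref{F/G}) really does descend to projection onto the $p$-factor under the identifications. All of this work has already been done in the earlier subsections, so the proposition reduces to the short stabilizer computation above.
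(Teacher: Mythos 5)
Your proposal is correct and follows essentially the same route as the paper: after the identifications $\c_{\tilde B}/G\cong(C^*\times C)/\Gamma$ and $\c_{\tilde B_0}/G\cong C/\Gamma$, your stabilizer computation (the fiber over $[p_0]$ is $C^*/\Gamma_{p_0}$, with $\Gamma_{p_0}$ trivial precisely when $p_0$ is rigid) is exactly the content of the paper's lemma on the map $\alpha:C^*\to(C^*\times C^*)/\Gamma$, whose injectivity is likewise deduced from the rigidity of the fixed point. The only cosmetic difference is that you compute the fiber over an arbitrary $[p_0]$ and then specialize to the dense open $C^*/\Gamma$, whereas the paper restricts to $(C^*\times C^*)/\Gamma$ from the outset.
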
 
  Let $n \ge 1$.  Let $$\c_{\tilde B}^n = \c_{\tilde B}\times_{\tilde B}\c_{\tilde B}\times_{\tilde B}\dots \times_{\tilde B}\c_{\tilde B}\ \ \ \ {\rm i.e.\ } n \ {\rm times},$$
  \noindent as in subsection \ref{fp} above; and ditto for  $\c_{{\tilde B}_0}^n$.   
  
  We let the group $G$ act diagonally{\footnote{as in  Subsection \ref{fp} and as in Equation \ref{Gact} above}}.   It was only for notational convenience that we worked, above,  with the case $n=1$. The same arguments, word for word, allow us (for general $n
  \ge 1$) to get, after passing to quotients by $G$:

  \begin{proposition}  The fibers of the mapping  $$\c_{\tilde B}^n/G \to  \c_{{\tilde B}_0}^n/G$$ are generically curves of genus $g$.\end{proposition}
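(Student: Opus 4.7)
My plan is to mimic the $n=1$ proof step by step, with the only modification being that the rigidity argument is applied to just one coordinate of an $n$-tuple. Since the statement concerns generic fibers and is purely geometric, I first reduce to the case in which $B_0 \subset M$ is a single point, classifying the isomorphism class $[C]$ of a fixed smooth projective curve $C$ of genus $g>1$; setting $\Gamma := \mathrm{Aut}(C)$, the identifications ${\tilde B}_0 \cong G/\Gamma$, $B \cong C^*/\Gamma$, and (from Lemma \ref{diag2}) the $\Gamma$-equivariant isomorphism $\c_{\tilde B}|_{{\tilde b}_0} \cong C^* \times C$ then hold exactly as in the $n=1$ case.

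Taking $n$-fold fiber powers over ${\tilde B}$ and ${\tilde B}_0$ respectively and restricting to the fibers over a fixed ${\tilde b}_0 \in {\tilde B}_0$ yields a $\Gamma$-equivariant projection
\[
C^* \times C^n \longrightarrow C^n, \qquad (q,p_1,\dots,p_n) \longmapsto (p_1,\dots,p_n),
\]
with $\Gamma$ acting diagonally on both sides. Because $G$ acts transitively on ${\tilde B}_0 = G/\Gamma$ with isotropy $\Gamma$, the $G$-quotient of the $n$-fold analog of diagram \ref{F/G} collapses the map $\c^n_{\tilde B}/G \to \c^n_{{\tilde B}_0}/G$ to
\[
(C^* \times C^n)/\Gamma \longrightarrow C^n/\Gamma,
\]
namely the very same projection modulo the diagonal $\Gamma$-action.

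To analyze its fibers, I restrict to the Zariski dense open of $C^n/\Gamma$ parametrizing orbits whose representatives $(p_1,\dots,p_n) \in C^n$ contain at least one rigid coordinate (this is the natural $n$-fold analog of the restriction to $C^{*}/\Gamma$ made in the statement of Lemma \ref{reallem10}). Fixing such a representative, I consider the assignment $\alpha\colon C^* \to (C^* \times C^n)/\Gamma$ given by $\alpha(q) := [(q,p_1,\dots,p_n)]$. Surjectivity of $\alpha$ onto the fiber of the projection over $[(p_1,\dots,p_n)]$ follows exactly as in the $n=1$ case: any $(q',p_1',\dots,p_n')$ projecting into that fiber is $\Gamma$-equivalent to $(\gamma(q'),p_1,\dots,p_n)$ for some $\gamma \in \Gamma$ simultaneously carrying each $p_i'$ to $p_i$. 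The one (and essentially only) step requiring the new hypothesis is injectivity: if $\alpha(q_1) = \alpha(q_2)$, then some $\gamma \in \Gamma$ fixes every $p_i$, and because at least one of the $p_i$ is rigid we conclude $\gamma = \mathrm{id}$ and hence $q_1 = q_2$. Therefore $\alpha$ identifies $C^*$ with the fiber, so the generic fiber of $\c^n_{\tilde B}/G \to \c^n_{{\tilde B}_0}/G$ is a Zariski open in the smooth projective curve $C$ of genus $g$, as required.
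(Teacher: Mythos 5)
Your proposal is correct and takes essentially the same route as the paper: the paper's own ``proof'' of the general-$n$ statement is precisely the remark that the $n=1$ argument of Appendix 1 goes through word for word, and you have carried that out faithfully, reducing to $B_0$ a point, identifying the map on fibers with $(C^*\times C^n)/\Gamma \to C^n/\Gamma$, and identifying the generic fiber with $C^*$ via the same surjectivity/injectivity argument. The one place you genuinely had to think --- that injectivity of $\alpha$ only requires \emph{some} coordinate $p_i$ to be rigid, which cuts out a dense open of $C^n/\Gamma$ --- is exactly the right observation and is consistent with the paper's restriction to $C^*/\Gamma$ in the $n=1$ case.
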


\section{{\bf Appendix 2:}   Automorphisms of curves: a lemma of Jakob Stix}\label{aut} 
\begin{prop}\label{bound}
Let $C$ be a smooth projective curve of genus $>1$, and let $\Sigma \subset C$ be the set of points of $C$ fixed by some automorphism of $C$ other than the identity. Then $|\Sigma|$ admits some finite upper bound  $B_g < \infty$, dependent only on the genus $g >1$.
%|\Sigma| \leq 170(g-1).
%$$
\end{prop}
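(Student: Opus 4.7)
The plan is to identify $\Sigma$ with the ramification locus of the quotient morphism
$$\pi : C \longrightarrow Y := C/G, \qquad G := \mathrm{Aut}(C),$$
which is a finite Galois cover (finiteness of $G$ for $g \geq 2$ is standard). A point $P \in C$ lies in $\Sigma$ precisely when its stabilizer $G_P \subset G$ is nontrivial, i.e.\ when $\pi$ ramifies at $P$; thus $\Sigma$ is exactly the reduced ramification locus of $\pi$. The bound on $|\Sigma|$ will then come from Riemann--Hurwitz combined with Hurwitz's classical analysis of signatures.

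Write the signature of $\pi$ as $(h; e_1, \ldots, e_r)$, where $h = g(Y)$ and $e_j \geq 2$ are the ramification indices over the $r$ branch points. The preimage of the $j$-th branch point is a single $G$-orbit of size $|G|/e_j$, so
$$|\Sigma| \;=\; \sum_{j=1}^{r} \frac{|G|}{e_j}.$$
Riemann--Hurwitz reads $2g - 2 = |G| \cdot \alpha$, where
$$\alpha \;:=\; 2h - 2 + \sum_{j=1}^{r} \Bigl(1 - \tfrac{1}{e_j}\Bigr) \;>\; 0.$$
Eliminating $|G|$ between these two relations gives the clean identity
$$|\Sigma| \;=\; (2g - 2)\cdot\frac{\sum_{j} 1/e_j}{\alpha}.$$

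It thus suffices to show that the ratio $\rho := (\sum_j 1/e_j)/\alpha$ is bounded above by $41$ uniformly over all \emph{admissible} signatures (those with $\alpha > 0$). This is essentially the inequality that underlies Hurwitz's bound $|G| \leq 84(g-1)$: the minimum of $\alpha$ over admissible signatures equals $1/42$, attained uniquely by $(h; e_1, e_2, e_3) = (0; 2, 3, 7)$, at which $\sum_j 1/e_j = 41/42$ and hence $\rho = 41$. A short case split confirms the extremum. For $h \geq 1$, each $e_j \geq 2$ forces $\sum 1/e_j \leq r/2 \leq (2h - 2 + r) - \alpha$ with $\alpha \geq r/2$, giving $\rho \leq 1$. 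For $h = 0$ with $r \geq 4$, the worst case is $(0; 2, 2, 2, 3)$ yielding $\alpha = 1/6$, $\rho = 11$. For $h = 0$ with $r = 3$, a finite enumeration of triples $(e_1, e_2, e_3)$ of integers $\geq 2$ with $1/e_1 + 1/e_2 + 1/e_3 < 1$ locates the minimum of $1 - \sum 1/e_j$ at $(2, 3, 7)$.

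Substituting $\rho \leq 41$ into the displayed identity yields $|\Sigma| \leq 82(g - 1)$, establishing the proposition with $B_g = 82(g - 1)$. The only real content is the signature bookkeeping, which is essentially classical, so no serious obstacle arises; the proposition is in effect a restatement of the Hurwitz inequality phrased so as to count ramified points rather than automorphisms.
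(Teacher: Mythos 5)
Your proposal is correct and takes essentially the same route as the paper's proof (Stix's Lemma~\ref{lem:Hurwitzboundramification} in Appendix 2): identify $\Sigma$ with the ramification locus of $C \to C/\Aut(C)$, apply Riemann--Hurwitz, and run the classical signature case analysis ($g_Y \geq 1$ versus $g_Y = 0$ with $3$, $4$, or $\geq 5$ branch points), with the extremal Hurwitz signature $(0;2,3,7)$ giving the same sharp bound $82(g-1)$. Your packaging of the computation as a single ratio $\rho = (\sum_j 1/e_j)/\alpha \leq 41$ is a minor cosmetic variant of the paper's case-by-case bounds on $|G|$ and $T$, not a different argument.
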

\vskip10pt

{\bf Remark:} Although we only need to know that there is some finite upper bound  $B_g < \infty$ for the purposes of application to Proposition \ref{rigid} in Section \ref{mod} we are grateful to Jakob Stix for providing  the following sharp bound.

A \textbf{Hurwitz curve} is a smooth projective curve $X$ which admits a branched Galois cover $X \to {\bf P}^1$ with only three branch points and ramification index $2$, $3$ and $7$. These are precisely the curves for which the Hurwitz-bound $|{\Aut(X)}| \leq 84(g-1)$ is an equality.

\begin{lemma}(Stix)
\label{lem:Hurwitzboundramification}
Let $X$ be a smooth projective geometrically connected curve of genus $g \geq 2$ over an algebraically closed field $k=\bar k$ of characteristic $0$. The number of points in $X$ which are fixed by a nontrivial automorphism of $X$ is bounded above by $82(g-1)$
\[
|\{P \in X \ ; \ \exists \id \not= \sigma \in \Aut(X): \ \sigma(P) = P\}| \leq 82(g-1).
\]
The bound is sharp and attained if and only if $X$ is a Hurwitz curve.
\end{lemma}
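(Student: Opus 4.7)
The natural strategy is a Riemann--Hurwitz calculation applied to the quotient map $\pi \colon X \to Y := X/\Aut(X)$, followed by a case analysis essentially parallel to Hurwitz's own derivation of the $84(g-1)$ bound.

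First I would observe that $\Sigma$ coincides with the ramification locus of $\pi$: a point $P \in X$ is fixed by some nontrivial $\sigma \in \Aut(X)$ exactly when its stabilizer $\Aut(X)_P$ is nontrivial, which is exactly when $\pi$ ramifies at $P$. Let $G = \Aut(X)$, let $Q_1, \ldots, Q_r \in Y$ be the branch points with corresponding ramification indices $e_1, \ldots, e_r \geq 2$, and let $g_Y$ be the genus of $Y$. Then the orbit of ramified points above $Q_i$ has size $|G|/e_i$, so
\[
|\Sigma| \;=\; |G| \sum_{i=1}^r \frac{1}{e_i} \;=:\; |G|\,S,
\]
while the Riemann--Hurwitz formula (in characteristic $0$, tame) reads
\[
2g - 2 \;=\; |G|\Bigl(2g_Y - 2 + \sum_{i=1}^r \bigl(1 - \tfrac{1}{e_i}\bigr)\Bigr) \;=\; |G|\,\chi, \qquad \chi := 2g_Y-2+R,\; R := \sum(1-\tfrac{1}{e_i}).
\]
Because $g \geq 2$ and $|G|$ is finite and positive, $\chi > 0$. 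Combining these gives the clean ratio $|\Sigma|/(g-1) = 2S/\chi$, so the problem reduces to the purely combinatorial task of maximizing $2S/\chi$ over admissible signatures $(g_Y; e_1,\ldots,e_r)$ with $\chi > 0$.

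Next I would run the standard case split on $g_Y$. If $g_Y \geq 2$ then $\chi \geq 2 + R$ and $S \leq R$ (since each $e_i \geq 2$ forces $1/e_i \leq 1 - 1/e_i$), giving $2S/\chi < 2$. If $g_Y = 1$ then $\chi = R$ and again $S \leq R$, giving $2S/\chi \leq 2$. So the only interesting case is $g_Y = 0$, where $\chi = R - 2$ and we must have $R > 2$. Writing $S + R = r$, we have $2S/\chi = 2S/(r-2-S)$, which is an increasing function of $S$ for fixed $r$; hence we want the largest $S$ with $R > 2$ for each $r$. I would now enumerate:
\[
\begin{aligned}
r=3:&\quad S<1,\ \text{best } (e_i)=(2,3,7),\ S=\tfrac{41}{42},\ \chi=\tfrac{1}{42},\ 2S/\chi=82;\\
r=4:&\quad S<2,\ \text{best } (2,2,2,3),\ 2S/\chi=22;\\
r=5:&\quad S<3,\ \text{best } (2,2,2,2,2),\text{ but then }R=\tfrac52,\text{ not }>2\text{? recheck};\\
\end{aligned}
\]
and so on, showing that $r \geq 4$ never reaches $82$ and the $r=3$ signatures $(2,3,k)$, $(2,4,k)$, $(3,3,k)$, etc., are all strictly smaller except for the hyperbolic triangle $(2,3,7)$. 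I would just tabulate the finite list of $(e_1,e_2,e_3)$ with $\sum 1/e_i < 1$ and check that $\sum 1/e_i$ is maximized uniquely by $(2,3,7)$.

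This delivers both the bound $|\Sigma| \leq 82(g-1)$ and the equality statement: equality forces $g_Y = 0$, $r = 3$, and $(e_1,e_2,e_3) = (2,3,7)$, which by definition makes $X$ a Hurwitz curve. The only real obstacle is the final combinatorial optimization, but since $(2,3,7)$ is already known to maximize $|G|/(g-1)$, showing it also maximizes $|\Sigma|/(g-1)$ amounts to the elementary observation that among hyperbolic triangle signatures the quantity $\sum 1/e_i$ (not just $1 - \chi$) is largest at $(2,3,7)$; no substantial new ingredient is required beyond the Riemann--Hurwitz setup.
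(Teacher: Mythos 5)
Your proposal is correct and follows essentially the same route as the paper: identify the fixed locus with the ramification locus of $X \to X/\Aut(X)$, apply Riemann--Hurwitz, split on $g_Y$ and the number of branch points, and observe that the extremal signature $(0;2,3,7)$ yields $2\cdot\frac{41}{42}\cdot 42 = 82$. The only wobble is your ``recheck'' at $r=5$: the signature $(0;2,2,2,2,2)$ has $R=\tfrac52>2$, so it is admissible and gives ratio $10$, which is harmless to the bound.
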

\begin{proof}
Let $G = \Aut(X)$ be the automorphism group and let $e_P$ denote the ramification index for points above $P \in X/G$ in the cover $X \to Y=X/G$. The number of points that we want to estimate is
\[
T = |{G}| \cdot \sum_{P \in Y} \frac{1}{e_P}.
\]
Let $B = |\{P \in Y \ ; \ e_P > 1\}|$ be the number of branch points. The Riemann Hurwitz formula tells us
\begin{align*}
2g-2 & = |{G}| (2g_Y - 2) + \sum_{P \in Y} |G| (1 - \frac{1}{e_P}) = |{G}| (2g_Y - 2 + B) - T \\
& = |{G}| (2g_Y - 2 + B - \sum_{P \in Y} \frac{1}{e_P}).
\end{align*}
If $g_Y \geq 1$, then since $1- \frac{1}{e_p} \geq \frac{1}{2} \geq \frac{1}{e_P}$ we are done because of 
\[
T \leq \sum_{P \in Y} |G| (1 - \frac{1}{e_P}) = 2 g-2 - |G| (2g_Y - 2) \leq 2g-2.
\]

So from now on we assume $g_Y = 0$.  Since $2g-2 > 0$, we must have that 
\[
B - 2 > \sum_{P \in Y} \frac{1}{e_P}.
\]
If $B \geq 5$, then 
\[
B - 2 - \sum_{P \in Y} \frac{1}{e_P} \geq B \cdot \frac{1}{2} - 2 \geq \frac{1}{2}
\]
and so 
\[
|G| = \frac{2g-2}{B - 2 - \sum_{P \in Y} \frac{1}{e_P}} \leq 4(g-1).
\]
It follows that 

\[
T \leq \sum_{P \in Y}|G| (1 - \frac{1}{e_P}) = 2 g-2 + 2|G|  \leq 10(g-1).
\]

If $B = 4$, then 
\[
B - 2 - \sum_{P \in Y} \frac{1}{e_P} \geq  2  - \frac{1}{2}- \frac{1}{2}- \frac{1}{2}- \frac{1}{3} = \frac{1}{6},
\]
hence 
\[
|G| \leq 12(g-1) \quad \text{ and } \quad T \leq 26(g-1).
\]
It remains to discuss the case of $B = 3$. Here, as in the proof of the Hurwitz bound, the minimal positive value of 
\[
B - 2 - \sum_{P \in Y} \frac{1}{e_P}
\]
is attained for ramification indices $2$, $3$ and $7$ leading to the Hurwitz bound $|G| \leq 84(g-1)$. But now 
\[
T = |G| \cdot (2g_Y - 2 + B) - 2(g-1) = |G| - 2(g-1) \leq 82(g-1). \qedhere
\]
\end{proof}


\begin{thebibliography}{bib}

 \bibitem{2}\label{2} E. Arbarello, M. Cornalba,  and P. A.Griffiths, Geometry of algebraic curves. {\bf {II}},
Grundlehren der Mathematischen Wissenschaften [Fundamental
              Principles of Mathematical Sciences],
{\bf 268},
{Springer}
    {2011}
    
 \bibitem{1}\label{1} L. Caporaso,  J. Harris, and B. Mazur, Uniformity of rational points. J.
Amer. Math. Soc., {\bf 10} 1-5 (1997)


 \bibitem{3}\label{3} {J. Harris  and I. Morrison},
{Moduli of curves},
{Graduate Texts in Mathematics},
  {\bf 187},
{Springer-Verlag, New York},
 (1998)
 \bibitem{4}\label{4}  Alexandre Grothendieck, Techniques de construction et th{\'e}or{\`e}mes d'existence en g{\'e}ometrie
alg{\'e}brique, IV: Les sch{\'e}mas de Hilbert, S{\'e}minaire Bourbaki (1961), no. 221.


 \bibitem{5}\label{5}  {\it Fundamental Algebraic Geometry: Grothendiecks FGA Explained,}  Eds.: 
Barbara Fantechi, 
Lothar G{\"o}ttsche, Luc Illusie, Steven L. Kleiman, Nitin Nitsure, Angelo Vistoli;  Mathematical Surveys and Monographs, 
{\bf 123},AMS,   (2005)

 \bibitem{6}\label{6}Nitin Nitsure, Construction of Hilbert and Quot schemes,  pp. 105137 in  {\it Fundamental algebraic geometry: Grothendieck's FGA explained.}   Eds.: 
Barbara Fantechi, 
Lothar G{\"o}ttsche, Luc Illusie, Steven L. Kleiman, Nitin Nitsure, Angelo Vistoli;  Mathematical Surveys and Monographs, 
{\bf 123},AMS,   (2005)

 \bibitem{7}\label{7} Brian Osserman, {\it A pithy look at the Quot, Hilbert, and Hom schemes}
{\url{http://citeseerx.ist.psu.edu/viewdoc/download?doi=10.1.1.643.9712&rep=rep1&type=pdf}}
     
\bibitem{8}\label{8} K. Ascher and A. Turchet, A fibered power theorem for pairs of log general type. Algebra \& Number Theory 10 1581-1600 (2016) 

\bibitem{9}\label{9} J. Koll{\'a}r.  Subadditivity of the Kodaira dimension: fibers of general type. Algebraic geometry, Sendai, 1985, 361-398, Adv. Stud. Pure Math., 10, North-Holland, Amsterdam, (1987)

\bibitem{10}\label{10} J. Koll{\'a}r.  Projectivity of complete moduli, J. Differential Geom.
Volume 32, Number 1 (1990), 235-268
\bibitem{11}\label{11}  Finn F. Knudsen. The projectivity of the moduli space of stable curves, III: The line bundles on $M_{g,n}$, and a proof of the projectivity of ${\bar M}_{g,n}$ in characteristic 0. . MATHEMATICA SCANDINAVICA, 52, 200-212.

\bibitem{12}\label{12}  The Stacks Project {\url{https://stacks.math.columbia.edu/}}

  \end{thebibliography}
\end{document}